\newtheorem{theorem}{Theorem}[section]
\newtheorem{lemma}[theorem]{Lemma}
\theoremstyle{definition}
\theoremstyle{remark}
\newtheorem{remark}[theorem]{Remark}
\numberwithin{equation}{section}
\newcommand{\al}{\alpha}
\newcommand{\be}{\beta}
\newcommand{\de}{\delta}
\newcommand{\ep}{\epsilon}
\newcommand{\ga}{\gamma}
\newcommand{\ka}{\kappa}
\newcommand{\la}{\lambda}
\newcommand{\si}{\sigma}
\newcommand{\te}{\theta}
\newcommand{\De}{\Delta}
\newcommand{\Ga}{\Gamma}
\newcommand{\La}{\Lambda}
\newcommand{\tJ}{\widetilde{J}}
\newcommand{\tT}{\widetilde{T}}
\newcommand{\tsi}{\widetilde{\si}}
\newcommand{\tga}{\widetilde{\ga}}
\def\RR{\mathbb{R}}
\def\ZZ{\mathbb{Z}}
\def\TT{\mathbb{T}}
\newcommand{\cB}{{\mathcal B}}
\newcommand{\cC}{{\mathcal C}}
\newcommand{\cE}{{\mathcal E}}
\newcommand{\cO}{{\mathcal O}}
\newcommand{\cS}{{\mathcal S}}
\newcommand{\cT}{{\mathcal T}}
\newcommand{\pd}{\partial}
\newcommand\minus\backslash
\newcommand{\id}{{\rm id}}
\newcommand{\ms}{\mspace{1mu}}
\newcommand\lan\langle
\newcommand\ran\rangle
\DeclareMathOperator\Div{div}
 \DeclareMathOperator\curl{curl}
\DeclareMathOperator\dist{dist}
\def\sur{{\cS_{\tga,\ep}}}
\def\es{\RR/\ell\ZZ}
\newcommand\DD{\mathbb D}
\renewcommand\leq\leqslant
\renewcommand\geq\geqslant
\newlength{\intwidth}
\DeclareMathOperator\T{\mathbf{t}}
\DeclareMathOperator\N{\mathbf{n}}
\DeclareMathOperator\B{\mathbf{b}}
\def\surf{{\cS_{\ga,\ep}}}
\begin{document}

\title[A problem of Ulam about
  magnetic fields generated by wires]{A problem of Ulam about
  magnetic fields generated by knotted wires}

\author{Alberto Enciso}
\address{Instituto de Ciencias Matem\'aticas, Consejo Superior de
  Investigaciones Cient\'\i ficas, 28049 Madrid, Spain}
\email{aenciso@icmat.es, dperalta@icmat.es}

\author{Daniel Peralta-Salas}

%
%
\begin{abstract}
In the context of magnetic fields generated by wires, we study the
connection between the topology of the wire and the topology of the
magnetic lines. We show that a generic knotted wire has
a magnetic line of the same knot type, but that given any pair of knots
there is a wire isotopic to the first knot having a magnetic line
isotopic to the second. These questions can be traced back to~Ulam in~1935.
\end{abstract}
\maketitle

\section{Introduction}

It is classical that the magnetic field generated by a closed wire is
given by the Biot--Savart law. That is, if $\ga(s)$ is a closed curve in
$\RR^3$ of length~$\ell$ and parametrized, say, by the arc-length parameter, its associated magnetic field is given by
the Biot--Savart integral
\[
B_\ga(x):= \int_0^\ell \frac{\dot\ga(s)\times (x-\ga(s))}{4\pi|x-\ga(s)|^3}\, ds\,.
\]
It is apparent that the vector field $B_\ga$, which is divergence-free, is in fact independent of the
parametrization of the curve, although it does depend on its
orientation.

While the magnetic fields created by simple curves, such as a circular
wire, are well understood and have been utilized to describe a wide
range of physical phenomena, our understanding of the magnetic fields
created by more complicated wires remains strikingly limited. This led
Ulam to consider the relationship between the degree of knottedness
of the wire and that of the associated periodic magnetic lines.

Specifically, Ulam posed the question of whether the magnetic field created by a knotted wire
must have a periodic magnetic line of nontrivial topology. This
is among the first problems of {\em The Scottish Book}~\cite[Problem
18]{Scottish}\/, the original manuscript of which goes back to
1935. This question of Ulam was set in a broader context in his later
problem collection~\cite[Section VII.7]{Ulam}, where he asks whether the
magnetic lines topologically reflect the knottedness of the wire.

Unlike many of the problems collected in {\em The Scottish Book}\/, the
progress made on these questions has been scarce. This is particularly
remarkable, on the one hand, because the existence of knotted fields
plays a significant role in a number of areas of physics (see
e.g.~\cite{Dennis,Irvine,Science}) and, on the other hand, because it
has been recently proposed to employ knotted wires to construct a new
magnetic confinement device, called the
knotatron~\cite{knotatron}. The reason for the lack of rigorous
contributions to this line of research is probably that, by the very
nature of the problems, the proof of these results must combine ideas
from the theory of dynamical systems, which are key to study the
knot types of the magnetic lines, with fine analytic
estimates for the Biot--Savart integral associated with a curve of
complicated geometry.

Our objective in this paper is to provide two somehow complementary
results concerning the relationship between the knottedness of the
wire and the topology of the magnetic lines. The first result asserts that
there are wires and magnetic lines whose knot types can be prescribed
at will (and independently). In particular, there are wires knotted in an
arbitrarily complicated way with an unknotted periodic magnetic line
and there are wires isotopic to the unknot with a magnetic line
isotopic to any given knot. More precisely, our first result can be stated
as follows:

\begin{theorem}\label{T1}
Let $\ga$ and $\tga$ be any two closed curves in $\RR^3$. Then
there is a wire isotopic to $\ga$ whose associated magnetic
field has a periodic magnetic line isotopic to~$\tga$.
\end{theorem}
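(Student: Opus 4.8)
The plan is to decouple the geometry of the wire from the topology of the magnetic line by building the wire out of two essentially independent pieces. The key structural observation is that the Biot--Savart field $B_\ga$ depends linearly and continuously (in fact smoothly in $C^\infty\loc$ away from the curve) on the current distribution, so if I write $\ga$ as a concatenation $\ga = \ga_1 \# \ga_2$ of two arcs I may analyze the contribution of each piece separately. I would place a small, tightly wound solenoidal coil $\ga_2$ whose core traces out a curve isotopic to the target knot $\tga$, and attach to it a long, thin return arc $\ga_1$ that closes the circuit and whose embedding realizes the knot type of $\ga$. The field of a tightly wound coil of $N$ turns following a curve $c$ is, to leading order as $N\to\infty$ after suitable rescaling of the current, close to the field of an idealized ``magnetic loop'' concentrated along $c$; inside the tube around $c$ the field is dominated by the near-field term, which forces the integral curves of $B$ to wind around and stay trapped near $c$.

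The core analytic step is to prove the existence of a periodic magnetic line isotopic to $\tga$ for the idealized coil in isolation, and then to show this line persists once the return wire $\ga_1$ is added and the winding is made finite. For the idealized problem I would exploit that near a smooth closed curve the Biot--Savart field has a dominant component circulating in the meridional direction, so that a suitably chosen cross-sectional disk transverse to $\tga$ returns to itself under the flow: the first-return (Poincar\'e) map on that disk is a well-defined diffeomorphism, and I would locate a fixed point of it. The natural mechanism is to show the return map is a contraction toward the core curve, or more robustly to show it maps a small disk strictly inside itself and apply the Brouwer fixed point theorem; either way the resulting periodic orbit, being confined to a thin tubular neighborhood of $\tga$ that is isotopic to a standard solid torus, is forced to be isotopic to $\tga$.

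The persistence argument is where the structural decoupling earns its keep. The field of the full wire is $B_\ga = B_{\ga_2} + B_{\ga_1}$, and I would arrange the scales so that on the tubular neighborhood of $\tga$ the perturbation $B_{\ga_1}$ is uniformly small in $C^1$ compared to the near-field of the coil $B_{\ga_2}$. This requires keeping the return arc $\ga_1$ at a bounded distance from $\tga$ while driving the coil's near-field to dominate, which one achieves by increasing the number of turns $N$ and rescaling; since $B_{\ga_1}$ is bounded on the tube while the near-field of $B_{\ga_2}$ blows up near the core, the ratio can be made arbitrarily small. A $C^1$-small perturbation of a flow whose Poincar\'e map sends a disk strictly into itself still sends that disk into itself, so the fixed point persists, yielding a periodic magnetic line of the full field that remains trapped in the tube and hence isotopic to $\tga$, while the global embedding of the wire is isotopic to $\ga$ by construction.

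The main obstacle I anticipate is the quantitative control of the Biot--Savart near-field and, crucially, the uniformity of the Poincar\'e-map estimates as the perturbation is switched on. One must show not merely that an idealized coil has the right periodic line, but that the confinement is \emph{robust} under a $C^1$-small but genuinely nonlocal perturbation, and that the transversal disk can be chosen uniformly along $\tga$ so the return map is globally defined. Making the competition between the singular self-field of the coil and the bounded field of the return wire fully rigorous---including careful handling of the regions where $\ga_1$ and $\ga_2$ come close during concatenation---will demand the delicate estimates for the Biot--Savart integral alluded to in the introduction, and I expect this to be the technical heart of the argument.
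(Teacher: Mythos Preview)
Your overall architecture---build a piece of wire near $\tga$ whose field traps a periodic line of that knot type, then graft on a distant piece to fix the wire's own knot type---matches the paper's. But the mechanism you propose for producing and stabilizing the periodic line is incompatible with the structure of magnetic fields. Since $\Div B=0$, the first-return map on any transverse disk preserves the area form $i_{B}(dx_1\wedge dx_2\wedge dx_3)$; an area-preserving map cannot send a disk strictly into its interior, so neither a contraction argument nor Brouwer via strict inclusion can ever work here. The paper resolves this by engineering the current so that the periodic line is \emph{hyperbolic} (saddle type, with normal multipliers $\lambda,\lambda^{-1}$, $\lambda\neq1$), which is consistent with area preservation and is structurally stable under $C^1$ perturbations via normally hyperbolic invariant manifold theory.

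A pure solenoidal coil does not produce this hyperbolicity. In the paper's notation a solenoid corresponds to $F\equiv0$ in the surface current~\eqref{defJ}, and then Lemma~\ref{L.asympt} gives $\cB_J=G(s)\,\pd_s$ plus normal components that are $O(\ep\log\ep+|y|^2)$, so the linearized return map at the core is the identity to leading order and the orbit $\{y=0\}$ is degenerate, hence not robust. The paper's key analytic idea is to add a second angular harmonic, taking $F(\te)=2\cos2\te$; by Lemma~\ref{L.asympt} this injects the linear terms $-y_2\,\pd_{y_1}-y_1\,\pd_{y_2}$ into the normal dynamics and makes $\{y=0\}$ a genuine hyperbolic saddle. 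Only then does the perturbation argument (approximating the surface current by a finite collection of small unknotted loops, then by a single unknot, then connect-summing with $\ga$ far away) go through. Your ``tightly wound coil'' intuition leads naturally to the degenerate/elliptic picture, which is precisely why the authors remark in the introduction that the hyperbolicity of the magnetic line is ``quite surprising''.
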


Our second result asserts that, however, Ulam's original question can
be answered in the affirmative at least for $C^0$-generic curves. That
is, given any curve in the space, there is a smooth $C^0$-small deformation
of it which is isotopic to the original curve and has a magnetic line of the same
knot type. To state this result, we will say that two isotopic curves
are {\em arbitrarily $C^k$-close}\/ if the isotopy can
be taken arbitrarily close to the identity in the $C^k$~norm.

\begin{theorem}\label{T2}
Let $\ga$ be a closed curve in $\RR^3$. Given any integer $k$, there is a wire isotopic
to~$\ga$ and arbitrarily $C^0$-close to it whose associated magnetic
field has a periodic magnetic line isotopic to~$\ga$  and arbitrarily
$C^k$-close to it.
\end{theorem}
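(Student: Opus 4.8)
The plan is to realize the required wire as a tightly wound coil whose axis follows $\ga$, exploiting the fact that the magnetic field in the interior of a solenoid is, to leading order, parallel to its axis. Concretely, fix a parallel (rotation--minimizing) orthonormal frame $(\T(s),\N(s),\B(s))$ along $\ga$ and, for small parameters $\rho,\de>0$, let $\ga_\vep$ be the closed curve obtained by winding a helix of radius $\rho$ and pitch $\de$ around $\ga$, i.e.\ roughly
\[
\ga_\vep(s)=\ga(s)+\rho\bigl(\cos(2\pi s/\de)\,\N(s)+\sin(2\pi s/\de)\,\B(s)\bigr),
\]
with $\de$ and the phase chosen so that the number of turns is an integer and $\ga_\vep$ closes up smoothly, compensating the holonomy of the parallel frame. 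We then look for a periodic magnetic line of $B_{\ga_\vep}$ that runs along the axis $\ga$, in the interior of the coil.

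Two elementary observations fix the topology and the $C^0$ size of the wire. First, $\ga_\vep$ lies on the boundary of the solid-torus tubular neighborhood of $\ga$ of radius $\rho$ and is a $(1,N)$-curve on that torus (one longitudinal turn, $N$ meridional turns); since any $(1,N)$-curve is isotopic in the solid torus to its core, $\ga_\vep$ is isotopic to $\ga$. Second, $|\ga_\vep(s)-\ga(s)|=\rho$, so by taking $\rho$ small the wire $\ga_\vep$ is arbitrarily $C^0$-close to $\ga$. Note that $\ga_\vep$ is \emph{not} $C^1$-close to $\ga$ (its tangent oscillates rapidly), which is precisely why only $C^0$-closeness is claimed for the wire.

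The analytic core is to control the Biot--Savart field $B_{\ga_\vep}$ on the interior tube $\cU=\{\,|x-\ga(s)|\le\rho/2\,\}$, which stays away from the wire. The claim is that, as the coil is refined ($\de\to0$ with $\rho$ fixed, so $N\to\infty$), the normalized field $B_{\ga_\vep}/|B_{\ga_\vep}|$ converges in $C^k(\cU)$ to the unit axial field $\T(s)$; the magnitude $|B_{\ga_\vep}|$ blows up, but this only reparametrizes the magnetic lines and does not affect their geometry. This is the step I expect to be the main obstacle: one must estimate the Biot--Savart integral over a rapidly oscillating curve passing close to the evaluation region, showing that the contributions of successive coils nearly cancel in the two transverse directions and reinforce in the axial direction, and that this persists after differentiating up to order $k$. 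The natural tools are a splitting into a local (near-diagonal) piece, treated by comparison with the exact field of a straight infinite solenoid, and a nonlocal piece controlled by the geometry of $\ga$; the uniformity in the derivatives is what forces one to track the $\de$-dependence carefully.

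Granting this estimate, the closed magnetic line is produced by a Poincar\'e return argument. Since $B_{\ga_\vep}$ is nearly parallel to $\T$ on $\cU$, it is transverse to each normal disk $D_{s_0}=\{s=s_0\}\cap\cU$, so the first-return map $\cP\colon D'\to D_{s_0}$ along the flow of $B_{\ga_\vep}$ is well defined on a slightly smaller disk $D'$ and corresponds to exactly one longitudinal turn around $\ga$. The transverse drift accumulated over one turn is small by the field estimate, so for a suitable radius $\cP$ maps the closed disk into itself; Brouwer's theorem then yields a fixed point, that is, a periodic magnetic line $\Ga$ of $B_{\ga_\vep}$ with longitudinal winding one and negligible meridional winding. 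Being contained in the thin tube $\cU$ and running once along $\ga$ as a $(1,0)$-curve, $\Ga$ is isotopic and $C^0$-close to $\ga$; finally, by smooth dependence of integral curves on the vector field, the $C^k$-closeness of the direction field of $B_{\ga_\vep}$ to $\T$ on $\cU$ upgrades this to $C^k$-closeness of $\Ga$ to $\ga$, as required.
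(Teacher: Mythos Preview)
Your solenoid approach has a real gap in the fixed-point step. Knowing that the Poincar\'e return map $\cP$ is $C^k$-close to the identity does \emph{not} give you a disk that $\cP$ maps into itself: a small translation is area-preserving, arbitrarily close to the identity, and has no fixed points whatsoever. Since $\Div B_{\ga_\vep}=0$, the return map is area-preserving, so you cannot hope for contraction; and your estimate on the direction field only says the transverse drift over one turn is \emph{small}, not that it vanishes or points inward. For a coil wound around a non-axisymmetric core~$\ga$, curvature and torsion generically produce exactly such a net drift. So Brouwer does not apply, and the closed magnetic line is not established.

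This is not a technicality you can patch by sharpening the asymptotics: in the limit your plain helix converges (as a current) to a surface current with constant longitudinal profile $F(\te)\equiv\text{const}$, for which Lemma~\ref{L.asympt} gives a transverse linearization that vanishes identically (all $a_n,b_n$ with $n\geq 1$ are zero). The limiting periodic orbit, if it exists at all, is therefore completely degenerate, and nothing guarantees its survival under perturbation. The paper's cure is to replace the plain helix by a wire whose tangent wobbles longitudinally with a $\cos 2\te$ profile as it winds meridionally; this is still a $(1,n)$-cable on the thin torus, hence isotopic to~$\ga$ and $C^0$-close to it, but now the limiting transverse part is $-y_2\,\pd_{y_1}-y_1\,\pd_{y_2}$, so the core orbit is \emph{hyperbolic}. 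Its persistence when one passes from the surface current to the approximating wire then follows from normal hyperbolicity rather than from any topological fixed-point argument. Remark~\ref{R.many} makes the requirement explicit: one needs $a_2^2+b_2^2\neq 0$, precisely the condition your pure solenoid violates.
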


It is worth emphasizing that the magnetic lines constructed in the
above theorems are not just a mathematical curiosity, but they should
be observable in actual experiments. This is because they are
structurally stable, so any $C^1$~small perturbation of the magnetic
field (e.g., the magnetic field associated with a $C^1$~small
deformation of the wire) still has a magnetic line of the same
topology. In this direction, a feature of this magnetic line that we
find quite surprising is that it is in fact hyperbolic, not
elliptic, so in particular there are no toroidal magnetic surfaces in
a neighborhood of it.

The paper is organized as follows. In Section~\ref{S.asympt} we will
consider magnetic fields generated by a current density field defined
on a toroidal surface and compute the asymptotic behavior of the
corresponding Biot--Savart integral as the width of the surface tends
to zero. In Section~\ref{S.wires} we will connect this
kind of magnetic fields with fields created by wires. This will hinge
on a measure-theoretic argument that exploits the structure of
divergence-free vector fields on a torus whose integral curves are all
periodic. These results are put to use in Sections~\ref{S.T1}
and~\ref{S.T2}, where we respectively prove Theorems~\ref{T1} and~\ref{T2}.

We conclude the Introduction with a word about notation. Throughout
the paper, all the curves are assumed to be smooth and without
self-intersections unless stated otherwise, and the isotopies and
diffeomorphisms are always of class $C^\infty$. When it does not give
rise to confusion, we will use the same notation~$\ga$ for
a parametrized curve in space, which is a map $\ga(t)$ from the real line (or the circle) to~$\RR^3$,
and for its image $\ga\equiv \ga(\RR)\subset\RR^3$.

\section{Asymptotics for magnetic fields generated by
  surface currents}
\label{S.asympt}

A key ingredient in the proof of Theorems~\ref{T1} and~\ref{T2} will
be the use of
magnetic fields that are not generated by wires, but by current
densities supported on toroidal surfaces. Our goal in this section
is to analyze the behavior of magnetic fields of this
kind. We will be particularly interested in the case where the
toroidal surface is very thin.

To make things precise, let us consider the toroidal surface
\[
\surf:=\big\{ x\in \RR^3: \dist(x,\ga)=\ep\big\}\,,
\]
which is a smooth torus of the same knot type as the curve~$\ga$
provided that the width~$\ep$ is small enough. We will define
coordinates $(s,y)$ on the domain bounded by $\surf$ as follows.
Let $s$ be an arc-length parametrization of the curve~$\ga$, whose
length we will denote by $\ell$. This amounts to saying that the
tangent field~$\dot\ga$ has unit norm and $\ga(s)$ is
$\ell$-periodic, so in particular we can assume that
$s\in\es$. Without loss of generality, we will make the
assumption that the curve $\ga$ does not have any inflection points, which allows us to define the normal
and binormal vectors $\N(s),\B(s)$ at any point of the curve $\ga(s)$. It is well known that
this assumption is satisfied for
generic curves~\cite[p.~184]{Bruce} (roughly speaking, ``generic'' here
refers to an open and dense set, with respect to a reasonable $C^k$
topology, in the space of smooth curves in $\RR^3$).

Using the normal and binormal vector fields and denoting by $\DD^2$ the two-dimen\-sional
unit disk, we can introduce coordinates
$(s,y)\in(\es)\times\DD^2$ in the solid torus bounded by $\surf$ via the diffeomorphism
\begin{equation}\label{sy}
(s,y)\mapsto \ga(s)+\ep\ms y_1\ms \N(s)+\ep \ms y_2\ms \B(s)\,.
\end{equation}
Recall that the unit tangent vector is $\T(s):=\dot\ga(s)$.

In the coordinates $(s,y)$, a short computation using the Frenet
formulas yields the formula for the Jacobian of this change of
coordinates, which shows that the volume measure is written in these coordinates as
\begin{equation}\label{dV}
dx:=\ep^2\,[1-\ep\ka(s)\, y_1]\,ds\,dy\,.
\end{equation}
Here and in what follows, $\ka(s)$ and $\tau(s)$ respectively
denote the curvature and torsion of the curve~$\ga$. We will sometimes take polar coordinates $(r,\te)$  in the disk~$\DD^2$, which are
defined as
\[
y_1=r\cos\te\,,\quad y_2=r\sin\te\,.
\]
In terms of these coordinates, the volume reads as
\[
dx=\ep^2r\,[1-\ep\ka(s)\, r \cos\te]\,ds\,dr\,d\te\,.
\]

We shall next consider magnetic fields created by current densities
supported on~$\surf$. The current distribution will be given by $J\,
dS$, where $dS$ is the surface measure on the
surface~$\surf$ and the density $J$ is a smooth tangent vector field
defined on~$\surf$. The associated magnetic field is then
\[
\cB_J(x):= \int_{\surf} \frac{J(x')\times (x-x')}{4\pi|x-x'|^3}\, dS(x')\,.
\]
We will always assume that the divergence of~$J$ on the surface is zero (which is equivalent
to demanding that the divergence of the vector-valued measure $J\, dS$ is zero, in the sense
of distributions), so that $\cB_J$ indeed has the physical interpretation of a magnetic field. In particular, $\cB_J$ satisfies Maxwell's equations
\[
\curl \cB_J= J\, dS\,,\qquad \Div \cB_J=0\,.
\]

The main result of this section is the following lemma, which provides
the asymptotic behavior as~$\ep\to0$ of the magnetic field $\cB_J$ for
a wide family of current densities~$J$.  This family has two important
features: firstly, the fields are divergence-free, and secondly, they
have a special dependence on~$\ep$ that is crucial to prove the
existence of hyperbolic periodic magnetic lines isotopic
to~$\ga$. Since we will only be interested in the behavior of the
field near~$\ga$, let us restrict our attention to a fixed small neighborhood of the core curve~$\ga$ (for instance, the
region $|y|<\frac1{10}$). In this region, if $h(\ep)$ is a
function that tends to zero as $\ep\to0$ (typically~$\ep^m$ or~$\ep\log\ep$), let us agree to say that a scalar quantity $q(s,y,\ep)$ is of order
$O(h(\ep))$ if
\begin{subequations}\label{defO}
\begin{equation}\label{defO1}
|\pd_s^jD_y^k q(s,y,\ep)|\leq C_{j,k}\,h(\ep)
\end{equation}
for all $j,k$. Likewise, given a
nonnegative integer~$m$ we will say
that $q(s,y,\ep)$ is of order $O(|y|^m)$ if
\begin{equation}\label{defO2}
|\pd_s^jD_y^k q(s,y,\ep)|\leq C_{j,k}|y|^{\max\{m-k,0\}}
\end{equation}
in the above region for all $j,k$ and uniformly in $\ep$. We will use the notations 
\begin{equation}\label{defO3}
O(1)\equiv
O(\ep^0)\quad \text{and} \quad O(h(\ep)+|y|^k):=O(h(\ep))+O(|y|^k)\,.
\end{equation}
\end{subequations}

\begin{lemma}\label{L.asympt}
Let us take the divergence-free tangent vector field 
\begin{equation}\label{defJ}
J :=\frac1{1-\ep\ka(s)\, \cos\te}\bigg(F(\te)\,\pd_s+\frac{G(s)}{\ep}\,\pd_\theta\bigg)\,,
\end{equation}
defined on $\surf$ in terms of two smooth functions $F$ and~$G$ of
period~$2\pi$ and $\ell$, respectively. We will also assume that $G$ does
not vanish. Consider the current distribution on $\surf$
given by $J\, dS$.
Then, using the order notation~\eqref{defO},
\begin{multline*}
\cB_J=\big[G(s)+O(\ep\log\ep+|y|)\big]\,
\pd_s+\frac{b_1+b_2y_1-a_2y_2+O(\ep\log\ep+|y|^2)}{2\ep}\,\pd_{y_1} \\
-\frac{a_1+a_2y_1+b_2y_2+O(\ep\log\ep+|y|^2)}{2\ep}\,\pd_{y_2}\,,
\end{multline*}
where $a_n,b_n$ are the Fourier coefficients of $F$:
\[
F(\te)=:\sum_{n=0}^\infty(a_n \cos n\te+ b_n\sin n\te)\,.
\]
\end{lemma}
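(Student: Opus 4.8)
The plan is to reduce the Biot--Savart integral to an explicit planar convolution by exploiting the thin-tube scaling, and then to evaluate the resulting model integral in closed form by complex analysis. First I would parametrize $\surf$ by $\Phi(s,\te)=\ga(s)+\ep\cos\te\,\N(s)+\ep\sin\te\,\B(s)$ and compute, via the Frenet equations, that $\pd_s\Phi=(1-\ep\ka\cos\te)\,\T-\ep\tau\sin\te\,\N+\ep\tau\cos\te\,\B$ and $\pd_\te\Phi=\ep(-\sin\te\,\N+\cos\te\,\B)$, so that the surface measure is $dS=\ep(1-\ep\ka\cos\te)\,ds\,d\te$. The decisive point is that the weight $1-\ep\ka\cos\te$ in this measure cancels \emph{exactly} the prefactor in~\eqref{defJ}: the vector-valued current distribution becomes the clean expression $J\,dS=\big(\ep F(\te)\,\pd_s\Phi+G(s)\,\pd_\te\Phi\big)\,ds\,d\te$, which to leading order equals $\ep\big(F(\te)\,\T-G(s)\sin\te\,\N+G(s)\cos\te\,\B\big)\,ds\,d\te$. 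In particular the whole current vector is $O(\ep)$.

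Next I would localize. Writing $x=\ga(s)+\ep y_1\N(s)+\ep y_2\B(s)$ with $|y|<\tfrac1{10}$, the contribution of source points with $|s'-s|\geq\delta$ is smooth and $O(\ep)$, because the embeddedness of $\ga$ forces $|x-x'|\geq c_\delta>0$ there for $\ep$ small. In the remaining near region I substitute $s'=s+\ep\sigma$ and Taylor expand; at leading order $x-x'=\ep\,\xi$ with $\xi=-\sigma\,\T+(y_1-\cos\te')\,\N+(y_2-\sin\te')\,\B$, while $ds'=\ep\,d\sigma$. All powers of $\ep$ cancel, and the field reduces to the $\ep$-independent model integral $\tfrac1{4\pi}\int_{\RR}\!\int_0^{2\pi}\frac{\mathcal J_0\times\xi}{|\xi|^3}\,d\sigma\,d\te'$, where $\mathcal J_0=F(\te')\,\T-G(s)\sin\te'\,\N+G(s)\cos\te'\,\B$ and the range of $\sigma$ has been extended to all of $\RR$ at the cost of an $O(\ep)$ tail (the surviving components of the integrand decay like $\sigma^{-2}$).

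Then I would evaluate this model integral. With $\rho^2=(y_1-\cos\te')^2+(y_2-\sin\te')^2$ one has $|\xi|^2=\sigma^2+\rho^2$, so the $\sigma$-integration is elementary: $\int_{\RR}(\sigma^2+\rho^2)^{-3/2}\,d\sigma=2/\rho^2$ and the odd, $\sigma$-linear terms of $\mathcal J_0\times\xi$ integrate to zero. What survives is $\tfrac1{2\pi}\int_0^{2\pi}\rho^{-2}\big[-G(p\cos\te'+q\sin\te')\,\T-Fq\,\N+Fp\,\B\big]\,d\te'$ with $p=y_1-\cos\te'$, $q=y_2-\sin\te'$. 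Passing to the complex coordinate $w=y_1+\iu y_2$ and $\zeta=e^{\iu\te'}$, the $\T$-integral is a Poisson-kernel computation giving exactly $G(s)$ (independently of $y$), while the transverse part collapses thanks to the identity $\dfrac{\overline{w}-\overline{\zeta}}{|\zeta-w|^2}=\dfrac{-1}{\zeta-w}$: one finds, by residues, $\cB_J^\B+\iu\,\cB_J^\N=-\tfrac1{2\pi}\int_0^{2\pi}\frac{F(\te')}{\zeta-w}\,d\te'=-\sum_{n\geq1}c_n\,w^{n-1}$ with $c_n=\tfrac12(a_n-\iu b_n)$. Expanding this holomorphic series to first order in $w$ reads off $\cB_J^\N=\tfrac12(b_1+b_2y_1-a_2y_2)+O(|y|^2)$ and $\cB_J^\B=-\tfrac12(a_1+a_2y_1+b_2y_2)+O(|y|^2)$.

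Finally, since the coordinate vector fields satisfy $\pd_{y_1}=\ep\N$ and $\pd_{y_2}=\ep\B$ while $\pd_s=\T+O(\ep|y|)$, rewriting $\cB_J=\cB_J^\T\,\T+\cB_J^\N\,\N+\cB_J^\B\,\B$ in the basis $\{\pd_s,\pd_{y_1},\pd_{y_2}\}$ divides the two transverse components by $\ep$, which is precisely the origin of the $\tfrac1{2\ep}$ prefactors and matches the stated formula. The main obstacle is not this leading-order algebra but the rigorous $C^\infty$ control of the remainders in the sense of~\eqref{defO}: one must differentiate the Biot--Savart integrand in $s$ and $y$ and re-estimate uniformly near the singularity, and one must track the first-order-in-$\ep$ corrections coming from the curvature term $\tfrac12\ep^2\sigma^2\ka\,\N$ in the expansion of $\ga(s+\ep\sigma)$ and from the variation of the Frenet frame. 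These corrections produce $\sigma$-integrals that are at worst logarithmically divergent and are effectively cut off at the scale $\sigma\sim1/\ep$, which is exactly what yields the $O(\ep\log\ep)$ error terms; carrying out this bookkeeping while preserving uniformity in all derivatives is the delicate part of the argument.
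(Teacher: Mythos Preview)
Your plan is correct and follows the same architecture as the paper's proof: localize to $|s'-s|<\delta$ (the far part being $O(\ep)$), rescale the longitudinal variable by $\ep$, integrate out the rescaled variable using $\int_{\RR}(\sigma^2+\rho^2)^{-3/2}\,d\sigma=2/\rho^2$ (the paper writes this with the substitution $t=(s'-s)/(\ep\sqrt{1+\beta})$, where $1+\beta=\rho^2$), and then carry out the angular integral. The one genuine difference is in this last step: the paper simply Taylor-expands $(1+\beta)^{-1}$ in~$y$ and reads off the first two Fourier coefficients of~$F$ term by term, whereas you recognize the Poisson-kernel structure and compute the angular integral exactly by residues, obtaining the closed form $\cB_J^{\B}+\iu\,\cB_J^{\N}=-\sum_{n\geq1}c_n w^{n-1}$ and, for the tangential piece, the exact identity $\cB_J^{\T}=G(s)$ for the model integral. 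Your route is a little cleaner and yields sharper information on the $|y|$-dependence of the leading term; the paper's route is more pedestrian but avoids complex analysis. Your handling of the $O(\ep\log\ep)$ remainders and of the derivative estimates is accurate and matches, at the level of a sketch, exactly what the paper does in its final paragraph.
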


\begin{proof}
Let us start by recalling that Equation~\eqref{sy} and the Frenet formulas
imply that the vector fields $\pd_s$, $\pd_r$ and $\pd_\te$
corresponding to the coordinates $(s,r,\te)$ can be written in
terms of the curvature~$\ka$ and torsion~$\tau$ of the curve $\ga$ as:
\begin{align}
\pd_s&\equiv\frac{\pd x}{\pd s}= (1-\ep \ka y_1)\T+\ep \tau
y_2\N - \ep\tau y_1 \B\,.\label{pds}\\
\pd_r&\equiv \frac{\pd x}{\pd r}=\frac{\ep y_1\N+\ep y_2 \B}{|y|}\,,\label{pdr}\\
\pd_\te&\equiv \frac{\pd x}{\pd \te}= -\ep y_2\N + \ep y_1\B\,.\label{pdte}
\end{align}
Here and in what follows, the curvature, torsion and elements of the
Frenet basis are evaluated at~$s$ unless stated otherwise. 

As the vector field $\pd_r$
has norm $\ep$, it follows from the expression~\eqref{dV} for the volume in the
coordinates $(s,r,\te)$ that the surface measure of $\surf$ can be
written as
\[
dS=\ep A\, ds\,d\te\,,
\]
with $A(s,\te):= 1-\ep\ka(s) \cos\te$. Hence
the field~$J$ is indeed divergence-free on~$\surf$, since the usual
formula for the divergence yields
\begin{align*}
\Div_{\surf} J=
\frac1{A(s,\te)}\frac\pd{\pd s}\bigg(\frac{A(s,\te) \,F(\te)}{1-\ep\ka(s)\,
  \cos\te}\bigg) +
\frac1{A(s,\te)}\frac\pd{\pd \te}\bigg(\frac{A(s,\te) \, G(s)}{\ep(1-\ep\ka(s)\,
  \cos\te)}\bigg) =0\,.
\end{align*}

Let us fix a point $x$ in a small neighborhood of the curve~$\ga$ that
we will describe by its coordinates $(s,y)\in(\es)\times\DD^2$, which will remain fixed
in the argument. Let us fix a small number $\de$ and write the
equation for $\cB_J$ as
\begin{align}\label{BI}
\cB_J(x)&=\int_\surf \frac{J(x')\times (x-x')}{4\pi|x-x'|^3}\, dx'=I+ O(\ep)\,,
\end{align}
where
\[
I=\ep \int_0^{2\pi}\int_{s-\de}^{s+\de}\frac{J(x')\times
  (x-x')}{4\pi|x-x'|^3}\, A(s',\te')\, ds'\,d\te'
\]
and $(s',\te')$ are the coordinates used to parametrize the point
$x'\in\surf$.

To see why this is true, it suffices
to observe that $J$~is uniformly bounded in~$\ep$ (here one has to use
that the norm of $\frac1\ep\pd_\te$ is~1 on the surface) and
that the distance $|x-x'|$ is at least $\de$ when $|s-s'|>\de$. This
readily yields
\[
|\cB_J(x)-I|=\bigg|\ep\int_0^{2\pi}\int_{|s'-s|>\de}\frac{J(x')\times
  (x-x')}{4\pi|x-x'|^3}\, A(s',\te')\, ds'\,d\te'\bigg|\leq C_\de |\surf|=O(\ep)\,.
\]
An analogous reasoning can obviously be applied to the derivatives of
$\cB_J(x)-I$, so Equation~\eqref{BI} follows.

We are interested in the asymptotic behavior of $I$ for small
$\ep$. In order to compute it, we will need to expand the quantity
$x-x'$ in the variables~$\ep$ and $s'-s$, which can be achieved
using Equation~\eqref{sy} and the Frenet formulas:
\begin{equation*}
x-x'=-(s'-s)\T+\ep(y_1-\cos\te')\N+\ep (y_2-\sin\te')\B + \cO(2)\,.
\end{equation*}
Hereafter we will use the notation $\cO(k)$ to denote terms in the
Taylor expansion that are at least a $k^{\text{th}}$ power in
$(\ep,s'-s)$. For example,
\[
(s'-s)^{2j}\,,\quad \ep(s'-s)^j\,,\quad \ep^{1+j}
\]
are all $\cO(2)$ if $j\geq1$. Likewise, using the formulas~\eqref{pds}--\eqref{pdte} the vector field $J(x')$ can be written as
\begin{equation*}
J(x')= F(\te')\,\T + G(s)\,\big(\cos\te'\B-\sin\te'\N\big)+ \cO(1)\,,
\end{equation*}
where we recall that $\T$, $\N$ and $\B$ are evaluated at~$s$.

Since the orthonormal basis $\{\T,\N,\B\}$ is positively oriented, it
then follows that
\begin{multline}
J(x')\times (x-x')= \ep \, G(s)\, \big(1-y_1\cos\te'-y_2\sin\te'\big)\T
\\-\Big[(s'-s)\,G(s)\,\cos\te'+\ep (y_2-\sin\te')F(\te')\Big]\N
\\-\Big[(s'-s)G(s)\sin\te'-\ep (y_1-\cos\te')F(\te')\Big]\B +\cO(2)\label{Jx}
\end{multline}
and
\begin{equation}\label{square}
|x-x'|^2=(s'-s)^2+\ep^2(1 +\be)+\cO(3)\,,
\end{equation}
with 
\[
\be:= -2y_1\cos\te'-2y_2\sin\te' +|y|^2\,.
\]

As the surface measure is 
\[
dS(x')=\ep(1+\cO(1))\, ds'\, d\te'\,,
\]
one can now use the formulas~\eqref{Jx} and~\eqref{square} to write
$I$ as 
\begin{align}\label{eqI}
I=\frac\ep{4\pi}\int_0^{2\pi}\int_{s-\de}^{s+\de} \frac{h_1 \T+ h_2\N+
  h_3\B}{[(s'-s)^2+\ep^2(1+\be)+\cO(3)]^{3/2}}\, ds'\, d\te'\,.
\end{align}
Here we have set
\begin{align*}
h_1&:=\ep\,G(s)\,\big(1-y_1\cos\te'-y_2\sin\te'\big)+ \cO(2)\,,\\
h_2&:= - \Big[(s'-s)G(s)\cos\te'+\ep (y_2-\sin\te')F(\te')\Big]+ \cO(2)\,,\\
h_3&:=- \Big[(s'-s)G(s)\sin\te'-\ep (y_1-\cos\te')F(\te')\Big]+ \cO(2)\,.
\end{align*}

Let us begin with the tangent component of $I$,
\begin{align*}
I_1&:=\frac\ep{4\pi}\int_0^{2\pi}\int_{s-\de}^{s+\de}
\frac{h_1}{[(s'-s)^2+\ep^2(1+\be)+\cO(3)]^{3/2}}\, ds'\, d\te'\,.
\end{align*}
Using again that $1+\be$ is positive for small enough~$y$, clearly
\[
(s'-s)^2+\ep^2(1+\be)+\cO(3)=\Big((s'-s)^2+\ep^2(1+\be)\Big)\, \big(1+\cO(1)\big)\,,
\]
so one can decompose $I_1$ as
\begin{align}
I_1&=\frac\ep{4\pi}\int_0^{2\pi}\int_{s-\de}^{s+\de}
\frac{\ep\,G(s)\,\big(1-y_1\cos\te'-y_2\sin\te'\big)}{[(s'-s)^2+\ep^2(1+\be)]^{3/2}}\,
ds'\, d\te'\notag\\
&\qquad\qquad + \ep\int_0^{2\pi}\int_{s-\de}^{s+\de}
\frac{\cO(2)}{[(s'-s)^2+\ep^2(1+\be)]^{3/2}}\, ds'\, d\te'\notag\\[1mm]
&=I_{11}+I_{12}\,. \label{defI1}
\end{align}
Since $1+\be>0$ for small~$y$, the second integral can be easily bounded as
\begin{align*}
|I_{12}|&\leq C\ep
\int_0^{2\pi}\int_{s-\de}^{s+\de}\frac{(s'-s)^2+\ep|s-s'|+\ep^2}{[(s'-s)^2+\ep^2(1+\be)]^{3/2}}\,
ds'\, d\te'\\
&\leq C\ep \int_{s-\de}^{s+\de}
\frac{1}{[(s'-s)^2+\ep^2]^{1/2}}\, ds'\\
&\leq C\ep\,\log\frac1\ep\,.
\end{align*}
To study the integral $I_{11}$ in~\eqref{defI1}, let us introduce the
variable 
\[
t:=\frac{s'-s}{\ep (1+\be)^{\frac12}}\,,
\]
in terms of which the integral reads as
\begin{align*}
I_{11}&=\frac1{4\pi}\int_0^{2\pi}\int_{-\de/[\ep (1+\be)^{\frac12}]}^{\de/[\ep (1+\be)^{\frac12}]}
\frac{G(s)\,(1-y_1\cos\te'-y_2\sin\te')}{(1+\be)\, (t^2+1)^{3/2}}\,
dt\, d\te'\\[1mm]
&=\frac1{4\pi}\int_0^{2\pi}\int_{-\infty}^{\infty}
\frac{G(s)\,(1-y_1\cos\te'-y_2\sin\te')}{(1+\be)\,(t^2+1)^{3/2}}\, dt\, d\te'+O(\ep^2)\\[1mm]
&=\frac{G(s)}{2\pi} \int_0^{2\pi}
\big(1-y_1\cos\te'-y_2\sin\te'\big)(1+2y_1\cos\te'+2y_2\sin\te') \, d\te'
\\
&\qquad \qquad \qquad \qquad \qquad \qquad \qquad \qquad \qquad \qquad \qquad \qquad \qquad+O(\ep^2+|y|^2)\\
&=G(s)+O(\ep^2+|y|)\,.
\end{align*}
To pass to the third line we have expanded in~$y$ and used that
\[
\int_{-\infty}^\infty \frac{dt}{(t^2+1)^{3/2}}=2\,.
\]

Now that we are done with the computation of $I_1$, let us consider
next the normal component of $I$ and decompose it as before:
\begin{align*}
I_2&:=\frac\ep{4\pi}\int_0^{2\pi}\int_{s-\de}^{s+\de} \frac{
  h_2}{[(s'-s)^2+\ep^2(1+\be)]^{3/2}}\, ds'\, d\te'\\
&=-\frac\ep{4\pi}\int_0^{2\pi}\int_{s-\de}^{s+\de} \frac{
   (s'-s)G(s)\cos\te'}{[(s'-s)^2+\ep^2(1+\be)]^{3/2}}\, ds'\,
 d\te'\\
&\qquad\qquad\qquad\qquad - \frac{\ep^2}{4\pi}\int_0^{2\pi}\int_{s-\de}^{s+\de} \frac{
   (y_2-\sin\te')F(\te')}{[(s'-s)^2+\ep^2(1+\be)]^{3/2}}\, ds'\, d\te' \\
&\qquad\qquad\qquad \qquad\qquad\qquad \qquad+ \ep\int_0^{2\pi}\int_{s-\de}^{s+\de} \frac{
      \cO(2)}{[(s'-s)^2+\ep^2(1+\be)]^{3/2}}\, ds'\, d\te'\\
&=:I_{21}+I_{22}+I_{23}\,.
\end{align*}
Arguing as in the case of $I_1$, one immediately gets that
\[
|I_{23}|\leq C\ep\log\frac1\ep\,,
\]
and the fact that the integrand is an odd function of $s'-s$
immediately implies that
\[
I_{21}=0\,.
\]
Moreover, the integral $I_{22}$ can be analyzed just as in the case of~$I_{11}$,
yielding
\[
I_{22}=\frac{b_1+b_2y_1-a_2y_2}2+O(\ep+|y|^2)\,.
\]

The binormal component of $I$, 
\[
I_3:=\frac\ep{4\pi}\int_0^{2\pi}\int_{s-\de}^{s+\de} \frac{ h_3}{[(s'-s)^2+\ep^2(1+\be)+\cO(3)]^{3/2}}\, ds'\, d\te'\,,
\]
can be computed as in the case of~$I_2$, obtaining
\[
I_3=-\frac{a_1+a_2y_1+b_2y_2}2+O(\ep+|y|^2)\,.
\]
As Equations~\eqref{pds}--\eqref{pdte} obviously imply that
\[
\T=(1+O(\ep))\pd_s+ O(|y|)\, \pd_{y_1}+ O(|y|)\, \pd_{y_2}\,,\quad
\N=\frac1\ep\pd_{y_1}\,,\quad \B=\frac 1\ep\pd_{y_2}\,,
\]
one obtains the desired asymptotic formula for $\cB_J$.

It is clear that the same method yields formulas for the derivatives of the components
of $\cB_J$, which correspond to the derivatives of the terms that we
have already computed (e.g., in the case of first order derivatives,
$\pd_s I_k$ and $\pd_{y_j}I_k$). To illustrate the reasoning, let us
consider $\pd_s I_1$. Since the point of coordinates
$(s,y)$ is in the interior of the solid torus bounded by~$\surf$, 
one can safely differentiate under the integral sign to find:
\begin{multline*}
\pd_s I_1=\frac\ep{4\pi}\int_0^{2\pi}\int_{s-\de}^{s+\de}
\frac\pd{\pd s}\bigg(\frac{\ep\,G(s)\,\big(1-y_1\cos\te'-y_2\sin\te'\big)}{[(s'-s)^2+\ep^2(1+\be)]^{3/2}}\bigg)\,
ds'\, d\te'\\
+ \ep\int_0^{2\pi}\int_{s-\de}^{s+\de}
\frac\pd{\pd
  s}\bigg(\frac{\cO(2)}{[(s'-s)^2+\ep^2(1+\be)]^{3/2}}\bigg)\, ds'\,
d\te'\,.
\end{multline*}
Here we have used that the boundary
terms cancel out by parity. Using that for all $q$
of order $\cO(2)$ one can write
\[
\pd_sq=\pd_{s'}q_1+q_2
\]
with $q_j$ also of order $\cO(2)$,
we can further simplify
these integrals as:
\begin{align*}
\pd_s I_1&=\frac\ep{4\pi}\int_0^{2\pi}\int_{s-\de}^{s+\de}
\frac{\ep\,G'(s)\,\big(1-y_1\cos\te'-y_2\sin\te'\big)+\cO(2)}{[(s'-s)^2+\ep^2(1+\be)]^{3/2}}\,
ds'\, d\te'\\
&\qquad \qquad - \ep\int_0^{2\pi}\int_{s-\de}^{s+\de}
\frac\pd{\pd
  s'}\bigg(\frac{\ep\,G(s)\,\big(1-y_1\cos\te'-y_2\sin\te'\big)+\cO(2)}{[(s'-s)^2+\ep^2(1+\be)]^{3/2}}\bigg)\, ds'\,
d\te'\\
&=\frac\ep{4\pi}\int_0^{2\pi}\int_{s-\de}^{s+\de}
\frac{\ep\,G'(s)\,\big(1-y_1\cos\te'-y_2\sin\te'\big)+\cO(2)}{[(s'-s)^2+\ep^2(1+\be)]^{3/2}}\,
ds'\, d\te'\,.
\end{align*}
Here we are using a parity argument both to get rid of the boundary
terms that appear when one integrates by parts and to neglect the terms of $\cO(2)$ that are odd functions of
$s'-s$, as they do not contribute to the integral.
As the above integral is of the same form as $I_1$, the previous reasoning
immediately yields
\[
\pd_s I_1=G'(s)+O(\ep\log\ep+|y|)\,.
\]
The derivatives with respect to~$y$, which are in fact easier, can be
handled with a completely analogous argument.
\end{proof}

\begin{remark}
The norm of the field $\cB_J$ is of order $1$, and the reason for which
the components of $\cB_J$ along the fields $\pd_{y_j}$ are of order
$1/\ep$ is simply that the norm $|\pd_{y_j}|$ is~$\ep$ (recall that
they are simply the normal or binormal vector divided by~$\ep$). 
\end{remark}

\section{From surface currents to closed wires}
\label{S.wires}

In this section we will derive tools that permit us to show that there
are configurations of wires that create magnetic fields which
approximate, in a certain sense, magnetic fields generated by current
densities of the form studied in Section~\ref{S.asympt}. For
simplicity, throughout this section we will denote by~$\cS$ a surface
of $\RR^3$ diffeomorphic to a torus.

An important ingredient in the proof will be the idea of convergence
of measures. We recall that a sequence of vector-valued measures
$d\La_n$ supported on~$\cS$ converges weakly to $d\La$ if, given any
continuous function $u:\cS\to\RR^3$ one has
\[
\lim_{n\to\infty}\int_\cS u\cdot d\La_n= \int_\cS u\cdot d\La\,.
\]
In this direction, an easy but very useful result is the following:

\begin{lemma}\label{L.conv}
Let $K$ be a compact subset of $\RR^3$. Consider a sequence of
vector-valued measures $d\La_n$ whose supports are contained in a
compact set $K'$ and assume that this sequence converges weakly to $d\La$. If
$K$ does not intersect~$K'$, then
\begin{equation}\label{estconvL}
\lim_{n\to\infty}\bigg\|\int_{K'}\frac{d\La_n(x')\times
  (x-x')}{4\pi|x-x'|^3}-\int_{K'}\frac{d\La(x')\times
  (x-x')}{4\pi|x-x'|^3}\bigg\|_{C^m(K)}=0
\end{equation}
for any integer~$m$.
\end{lemma}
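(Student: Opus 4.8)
The plan is to prove this by reducing the $C^m$-convergence of the Biot--Savart fields to the weak convergence of the measures $d\La_n$. The key observation is that, because $K$ and $K'$ are disjoint compact sets, there is a positive distance $d_0:=\dist(K,K')>0$ between them. Hence for every $x\in K$ and every $x'\in K'$ we have $|x-x'|\geq d_0$, so the integrand never comes close to the singularity of the Biot--Savart kernel. First I would fix an integer $m$ and a multi-index of derivatives in~$x$ of order at most~$m$, and write the difference of the two integrals in \eqref{estconvL} as a single integral against $d\La_n-d\La$.

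Next I would exploit the structure of the expression as an integral of a fixed smooth kernel against the varying measure. For each fixed component $i\in\{1,2,3\}$ of the output vector field, the integrand is a vector-valued function of $x'$ of the form $u_x(x')$, where
\[
u_x(x'):=\frac{\bee_i\times(x-x')}{4\pi|x-x'|^3}\quad\text{(suitably interpreted componentwise)},
\]
and the relevant quantity is $\int_{K'}u_x(x')\cdot(d\La_n-d\La)(x')$. Since the support $K'$ stays away from $x\in K$, the map $(x,x')\mapsto \pd_x^\al\,\dfrac{x-x'}{4\pi|x-x'|^3}$ is smooth and bounded, together with all its $x'$-derivatives, on the compact set $K\times K'$; this is where disjointness is essential, as it keeps the denominator bounded below by $d_0^3$. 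Therefore, for each fixed $x$, the test function $x'\mapsto \pd_x^\al u_x(x')$ is continuous on $K'$, and weak convergence of $d\La_n\to d\La$ yields the pointwise (in $x$) convergence of the differentiated integrals.

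To upgrade pointwise convergence in $x$ to uniform convergence on~$K$ (which is what the $C^m(K)$ norm requires), I would use a compactness/equicontinuity argument. The family $\{\pd_x^\al u_x\}_{x\in K}$ is equicontinuous in $x'$ with a modulus independent of $x$, and depends continuously on $x$ uniformly over $K'$, because all the relevant derivatives of the kernel are uniformly continuous on the compact set $K\times K'$. Combined with the fact that weakly convergent sequences of measures are uniformly bounded in total variation (by the uniform boundedness principle, since each $d\La_n$ has support in the fixed compact $K'$), a standard Arzel\`a--Ascoli argument promotes the pointwise convergence in $x$ to uniform convergence, and the same works simultaneously for every derivative $\pd_x^\al$ with $|\al|\leq m$. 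Summing over the finitely many such derivatives gives the claimed $C^m$ bound.

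The step I expect to require the most care is the uniformity in~$x$, that is, passing from convergence of the integral for each fixed $x$ to convergence in the supremum norm over all $x\in K$ and all derivatives of order $\leq m$. The pointwise statement is an immediate consequence of the definition of weak convergence, but the definition alone does not give any uniformity. The cleanest route is to observe that the assignment $x\mapsto \pd_x^\al u_x$ is a continuous map from $K$ into $C(K')$ (with the sup norm), so its image is a compact, hence totally bounded, subset of $C(K')$; a finite net of test functions then reduces the uniform estimate to finitely many applications of weak convergence, each handled by the disjointness bound $|x-x'|\geq d_0$. This finite-net reduction is the heart of the matter, and everything else is a direct verification that the kernel and its $x$-derivatives are smooth and bounded on $K\times K'$.
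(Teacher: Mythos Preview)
Your proposal is correct and follows essentially the same approach as the paper: both arguments rest on the fact that the Biot--Savart kernel and all its $x$-derivatives are continuous on the compact set $K\times K'$, so that weak convergence of the measures yields convergence of the integrals. The paper's proof simply asserts the resulting convergence is uniform in $x\in K$, whereas you spell out the justification (uniform boundedness of the $d\La_n$ in total variation plus a finite-net/compactness argument for the family $\{\pd_x^\al u_x\}_{x\in K}\subset C(K')$), which is exactly the point the paper glosses over.
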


\begin{proof}
Observe that the kernel
\begin{equation}\label{Kxxp}
(x,x')\mapsto \frac{x-x'}{4\pi|x-x'|^3}
\end{equation}
is continuous in the set $(x,x')\in K\times K'$. The
convergence of the measures $d\La_n$ to $d\La$ and the fact that these measures are supported
on $K'$ imply that, uniformly for all $x\in K$, 
\[
\lim_{n\to\infty} \int_{K'} \frac{d\La_n(x')\times
  (x-x')}{4\pi|x-x'|^3} = \int_{K'} \frac{d\La(x')\times
  (x-x')}{4\pi|x-x'|^3}\,.
\]
Since the derivatives of the kernel~\eqref{Kxxp} with respect to~$x$
are also continuous on~$K\times K'$, the same argument yields the
$C^k$~convergence~\eqref{estconvL} on the set~$K$.
\end{proof}

The next lemma shows how to approximate the magnetic field created by a surface
distribution $J\, dS$ through the Biot--Savart integral (cf.\
Equation~\eqref{BI}) by that of a collection of magnetic
wires. Concerning the statement of the lemma, it is
worth noting that we require the tangent vector field~$J$ to be
divergence-free. This is not an additional condition that we impose
because we want to interpret this field as a current density, but an
actual necessary technical condition for the statement of the lemma to
hold true. This is because the divergence of any measure of the form
$\dot\ga_k(t)\, dt$ can be easily
seen to be zero, in the sense of distributions, so the fact that there is a collection of measures $d\La_n$ of
the form~\eqref{dLa1} that converge to $J\, dS$ automatically
implies that the tangent field $J$ is divergence-free on~$\cS$ (or,
equivalent, that the divergence of $J\, dS$ is zero).

\begin{lemma}\label{L.from}
 Let $J$ be a tangent vector field on the toroidal surface~$\cS$ whose
 divergence on the surface is zero. Let us assume that $J$ does not
 vanish and that all its integral curves are periodic. Then there exist a 
 positive constant $c_0$ and a sequence of finite collections of
 (distinct) periodic integral curves of~$J$,
$\ga_k:(\RR/T_k\ZZ)\to\cS$, such that the vector-valued measure
\begin{equation}\label{dLa1}
d\La_n:=\frac{c_0}n\sum_{k=1}^n \dot\ga_k(t)\, dt
\end{equation}
converges weakly to $J\, dS$ as $n\to\infty$. Here $T_k$ is
the minimal period of the integral curve $\ga_k$.
\end{lemma}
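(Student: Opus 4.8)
The plan is to use the three hypotheses on $J$ to disintegrate the measure $J\,dS$ as a one-parameter average over the closed orbits of $J$, and then to approximate that continuous average by Riemann sums, each term of which is of the required form $\dot\ga_k\,dt$. The first thing I would do is extract the structural consequence of the hypotheses. Since $J$ is divergence-free, the $1$-form $\al:=\iota_J\,dS$ is closed, it is nowhere zero because $J$ is, and $\al(J)=dS(J,J)=0$, so $J$ spans $\ker\al$ pointwise. Passing to the universal cover $\RR^2\to\cS\cong\TT^2$, the pullback of $\al$ is exact, say $dF$ with $F$ nonsingular and $F(z+v)=F(z)+\lan\mathbf p,v\ran$ for $v\in\ZZ^2$, where $\mathbf p\in\RR^2$ is the vector of periods of $\al$. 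An orbit of $J$ is the projection of a level set of $F$, and this projection is compact precisely when the level set is invariant under some nonzero $v\in\ZZ^2$, i.e.\ when $\lan\mathbf p,v\ran=0$. Hence the assumption that \emph{every} orbit is periodic forces $\mathbf p=c\,\mathbf q$ with $\mathbf q\in\ZZ^2$ primitive and $c\in\RR$; rescaling, $\tau:=F/c\bmod 1$ is a well-defined submersion $\tau:\cS\to\RR/\ZZ$ with $\al=c\,d\tau$, whose fibers are connected (as $\mathbf q$ is primitive) and coincide with the closed orbits of $J$.

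Next I would set up coordinates adapted to this fibration. Writing $(\tau,t)$ for the leaf coordinate together with the $J$-flow time (so that $J=\pd_t$ and the orbits are the slices $\{\tau=\text{const}\}$), the identity $\iota_J\,dS=\al$ and the closedness of $\al$ force the area form to have density independent of $t$, and after the above normalization of $\tau$ one gets $dS=c\,d\tau\wedge dt$ with $c$ constant. Denoting by $\ga_\tau(t)$ the orbit through the slice $\tau$ parametrized by $J$-time, with minimal period $T(\tau)$, Fubini then yields the disintegration
\[
\int_\cS u\cdot J\,dS=c\int_{0}^{1}\left(\int_0^{T(\tau)}u(\ga_\tau(t))\cdot\dot\ga_\tau(t)\,dt\right)d\tau
\]
for every continuous $u:\cS\to\RR^3$; in other words $J\,dS=\int_0^1\Phi(\tau)\,d\tau$ weakly, where $\Phi(\tau)$ is the vector-valued measure carried by the single orbit $\ga_\tau$, namely $\int_\cS u\cdot\Phi(\tau)=c\int_0^{T(\tau)}u(\ga_\tau)\cdot\dot\ga_\tau\,dt$. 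Because $J$ is smooth, the orbits, their minimal periods $T(\tau)$, and their $J$-parametrizations depend smoothly on $\tau$, so for each fixed $u$ the function $\tau\mapsto\int_\cS u\cdot\Phi(\tau)$ is continuous on $\RR/\ZZ$.

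Finally I would approximate the continuous average by equally spaced Riemann sums. Setting $c_0:=c$, $\tau_k:=k/n$, and $\ga_k:=\ga_{\tau_k}$ (which are distinct, since distinct leaves of the foliation are disjoint), the measures $d\La_n=\frac{c_0}{n}\sum_{k=1}^n\dot\ga_k(t)\,dt$ satisfy $\int_\cS u\cdot d\La_n=\frac1n\sum_{k=1}^n\int_\cS u\cdot\Phi(\tau_k)$, which is exactly a Riemann sum for $\int_0^1\int_\cS u\cdot\Phi(\tau)\,d\tau=\int_\cS u\cdot J\,dS$. The continuity established above gives convergence of these sums for each fixed $u$, which is precisely the asserted weak convergence. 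The main obstacle is the structural step: turning "all orbits closed" into the commensurability of the periods of $\al$, and hence into a genuine circle fibration of $\cS$ with constant-density coordinates $(\tau,t)$; the universal-cover computation above is the device that accomplishes this, after which the disintegration and the Riemann-sum argument are routine.
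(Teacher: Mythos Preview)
Your argument is correct and follows a genuinely different path from the paper's. Both proofs exhibit the orbits of $J$ as the fibers of a smooth fibration $\cS\to S^1$ and then disintegrate $J\,dS$ along it, approximating the integral over the base by sums over finitely many fibers; the difference lies in how the fibration and the transverse coordinate are produced. The paper invokes a reference (Godbillon) for the existence of a global closed transversal $\cC$ and the smoothness of the period function, parametrizes $\cC$ by an arbitrary coordinate~$\al$, and rescales all orbits to period~$1$; in those coordinates the area density $B(\al)$ is non-constant, so the sample points must be equidistributed with respect to the weighted measure $d\rho\propto B(\al)/\tT(\al)\,d\al$. You instead derive the fibration from the closed nowhere-zero $1$-form $\iota_J\,dS$ itself (periodicity of orbits forces its period vector to be a real multiple of a primitive integer vector, whence the submersion~$\tau$), and your choice of~$\tau$, characterized by $\iota_J\,dS=c\,d\tau$, forces the area density in the coordinates $(\tau,t)$ (with $t$ the actual $J$-flow time) to be the constant~$c$; hence plain Riemann sums with equally spaced nodes suffice, and $c_0$ is concretely identified as the nonzero period of $\iota_J\,dS$. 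What each buys: your route is self-contained and pins down $c_0$, while the paper's more flexible setup (an arbitrary sequence equidistributed for a prescribed measure) is exactly what gets reused in Lemma~\ref{L.curve}. One small wrinkle: your nodes $\tau_k=k/n$ depend on~$n$, whereas the statement and the paper's proof fix a single sequence $(\ga_k)_{k\ge1}$ and use its initial segments; replacing $k/n$ by any fixed Lebesgue-equidistributed sequence in $\RR/\ZZ$ (e.g.\ $\tau_k=k\theta\bmod 1$ for irrational~$\theta$) makes your continuity argument match the statement verbatim.
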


\begin{proof}
It is known (cf.\ e.g.~\cite[4.1.14]{Godbillon}) that if $J$ is a non-vanishing
divergence-free field on the torus whose integral curves are all
periodic, then:
\begin{enumerate}
\item There is a closed transverse curve~$\cC$ on~$\cS$ which
  intersects all the integral curves of~$J$ at exactly one point.

\item The period function $T:\cS\to(0,\infty)$, which maps each point
  on the surface to the minimal period of the integral curve of~$J$ passing
  through it, is smooth.
\end{enumerate}

Let us now define the isochronous field associated to~$J$,
\begin{equation}\label{tJ}
\tJ:=T\, J\,,
\end{equation}
whose integral curves are all closed and of
period~1. Consider a periodic coordinate on~$\cC$ of period~1,
\begin{equation}\label{Theta}
\Theta: \cC\to\TT^1\,,
\end{equation}
with $\TT^1:= \RR/\ZZ$. Let us now construct a
map $\Psi: \cS\to \TT^2$ by setting
\begin{equation}\label{defPsi}
\Psi^{-1}(\al,\si):=\phi_\si[\Theta^{-1}(\al)]\,,
\end{equation}
where $\phi_\si$ is the flow at time~$\si$ of the field~$\tJ$. Since~$\cC$
intersects each integral curve exactly once and all integral curves of ~$\tJ$
have period~1, it is obvious that $\Psi$ is a
diffeomorphism. Moreover, it is apparent
that one can write the push-forward of~$\tJ$ under the
diffeomorphism~$\Psi$ as
\[
\Psi_*\tJ=\pd_\si\,.
\]

As the period function takes the
same value on each integral curve of~$J$, there is a smooth function
$\tT:\TT^1\to(0,\infty)$ such that
\begin{equation}\label{tT}
T[\Psi^{-1}(\al,\si)]=\tT(\al)\,.
\end{equation}
In particular, $T$ is a first integral of~$J$, so it is trivial that the
divergence of~$\tJ$ on the surface is also zero. Hence, it
follows that the push-forward $\tsi$ of the area 2-form on~$\cS$ to~$\TT^2$ can be written as
\begin{equation}\label{B}
\tsi= B(\al)\, d\al\wedge d\si\,,
\end{equation}
where $B:\TT^1\to\RR$ is a positive smooth function. In order to see
this, it suffices to write $\tsi= B\, d\al\wedge d\si$ and notice that the field $\Psi_*\tJ=\pd_\si$ is divergence-free
with respect to~$\tsi$, which implies that
\[
0=(\Div_{\tsi} \pd_\si)\, \tsi = d (i_{\pd_\si} \tsi)= -d(B\, d\al)\,,
\]
which shows that $B$ is independent of~$\si$.

Let us consider a sequence of points $\al_k\in \TT^1$ that is
uniformly distributed with respect to the probability measure on~$\TT^1$ given by 
\begin{equation}\label{measure}
d\rho:=\frac{B(\al)}{c_0 \, \tT(\al)}\, d\al\,,
\end{equation}
where $B(\al)$ is the positive function defined in~\eqref{B} and
\[
c_0:=\int_{\TT^1} \frac{B(\al)}{\tT(\al)}\, d\al\,.
\]
Specifically, this means that, for any interval $I$ of $\TT^1$,
\begin{equation}\label{unif}
\lim_{n\to\infty}\frac{\text{\#}\{k\leq n: \al_k\in I\}}n =\int_I d\rho\,.
\end{equation}

Let $\tga_k:\TT^1\to\cS$ be the integral curve of~$\tJ$ with initial
condition $\tga_k(0)=\Theta^{-1}(\al_k)$.
We shall next check that 
\begin{align*}
d\La_n:=\frac{c_0}n\sum_{k=1}^n \dot\tga_k(\si)\, d\si
\end{align*}
converges weakly to $J\, dS$. Before doing it, let us observe that
this implies the lemma, because if $\ga_k(t)$ denotes the integral
curve of~$J$ with initial condition $\ga_k(0)=\Theta^{-1}(\al_k)$, it
is obvious from the invariance of the measure under reparametrization
(namely, the fact that $\dot\tga_k(\si)\, d\si =\dot \ga_k(t)\, dt$) that $d\La_n$ is also given by the formula~\eqref{dLa1}
provided in the statement.

To show that $d\La_n$ converges weakly to $J\, dS$, recall that the
fact that the points $\{\al_k\}$ are uniformly distributed with
respect to the probability measure $d\rho$ is equivalent to saying
that the measure
\[
d\rho_n:=\frac1n\sum_{k=1}^n \de_{\al_k}
\]
on $\TT^1$ converges weakly to $d\rho$ as $n\to\infty$. Hence,
\begin{align*}
  \int u\cdot d\La_n&=\frac{c_0}n\sum_{k=1}^n\int_{\TT^1}\langle
  \Psi_*
  u,\pd_\si\rangle|_{(\al_k,\si)}\, d\si\\
  &=c_0\int_{\TT^2}\langle \Psi_* u,\pd_\si\rangle\, d\rho_n\, d\si
\end{align*}
satisfies
\begin{align*}
\lim_{n\to\infty}  \int u\cdot d\La_n&=c_0\int_{\TT^2}\langle \Psi_*
u,\pd_\si\rangle\, d\rho\, d\si\\
&=\int_{\TT^2}\langle \Psi_* u,\pd_\si\rangle\,
\frac{B(\al)}{\widetilde T(\al)}\, d\al\, d\si\\
&=\int_\cS u\cdot J\, dS\,,
\end{align*}
where we have used that $\tJ= T\, J$.
\end{proof}

In the proof of Theorem~\ref{T2} we will need the following lemma,
which is basically a refinement of Lemma~\ref{L.from} that provides a
versatile sufficient condition for a sequence of vector-valued
measures supported on curves (not necessarily integral curves of the
field~$J$) to converge to the current distribution $J\, dS$:

\begin{lemma}\label{L.curve}
Let $J$ be as in Lemma~\ref{L.from} and consider the associated map
$\Theta:\cC\to\TT^1$ defined in~\eqref{Theta}. Suppose that $\Ga_n :
\RR/T_n\ZZ\to\cS$ is a sequence of periodic curves without
self-intersections that satisfy the
following properties, with $T_n$ being the minimal period:
\begin{enumerate}
\item The curves $\Ga_n$ intersect~$\cC$ transversally and their
  points of intersection are uniformly distributed with
  respect to the measure $d\rho$ defined in~\eqref{measure}, that is,
  for any open subset $I\subset \cC$ one has
\[
\lim_{n\to\infty}\frac{\text{\#}(\Ga_n\cap I)} {\text{\#}(\Ga_n\cap \cC)}=\int_{\Theta(I)}d\rho\,.
\]

\item The tangent vectors $\dot\Ga_n(\si)$ converge uniformly to the
  isochronized field~$\tJ$, defined in~\eqref{tJ}:
\[
\lim_{n\to\infty}\sup_{\si\in\RR}\big|\dot\Ga_n(\si) - \tJ(\Ga_n(\si))\big|=0\,.
\]
\end{enumerate}
Then there is a positive constant $c_0$ such that the vector-valued measures
\[
d\la_n:=\frac{c_0}{\text{\#}(\Ga_n\cap \cC)}\, \dot\Ga_n(\si)\, d\si \,,
\]
which are supported on $\Ga_n$, converge weakly to $J\, dS$ as $n\to\infty$.
\end{lemma}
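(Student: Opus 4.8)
The plan is to reduce this to the weak-convergence computation already carried out at the end of the proof of Lemma~\ref{L.from}. Fix a continuous function $u:\cS\to\RR^3$; we must show that $\int u\cdot d\la_n\to\int_\cS u\cdot J\,dS$. Setting $N_n:=\text{\#}(\Ga_n\cap\cC)$, by definition
\[
\int u\cdot d\la_n=\frac{c_0}{N_n}\int_0^{T_n}u(\Ga_n(\si))\cdot\dot\Ga_n(\si)\,d\si\,.
\]
The idea is that hypothesis~(ii) lets us replace the velocity $\dot\Ga_n$ by the isochronous field $\tJ$ along $\Ga_n$, after which $\Ga_n$ behaves, on each arc between consecutive crossings of~$\cC$, like a genuine integral curve of~$\tJ$; hypothesis~(i) then feeds the crossing points into the limit exactly as the uniformly distributed points $\al_k$ did in Lemma~\ref{L.from}.

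First I would set up the arc decomposition. Let $0\leq\si_1<\dots<\si_{N_n}<T_n$ be the (transverse) crossing times of $\Ga_n$ with $\cC$, put $\al_{(j)}:=\Theta(\Ga_n(\si_j))\in\TT^1$, and regard the indices cyclically. On the arc $[\si_j,\si_{j+1}]$ the curve starts exactly at $\Ga_n(\si_j)\in\cC$ and, by hypothesis~(ii), has velocity uniformly $o(1)$-close to $\tJ$. A continuous-dependence (Gronwall) argument applied afresh on each arc then shows that $\Ga_n(\si_j+\cdot)$ stays uniformly $o(1)$-close, on bounded time intervals, to the integral curve $\si\mapsto\phi_\si[\Theta^{-1}(\al_{(j)})]=\Psi^{-1}(\al_{(j)},\si)$ of $\tJ$ issuing from the same point. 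Since that integral curve has period~$1$ and meets~$\cC$ exactly once per period, and $\cC$ is crossed transversally, the next crossing of $\Ga_n$ occurs at $\si_{j+1}-\si_j=1+o(1)$ uniformly in $j$; in particular $T_n=N_n(1+o(1))$. The hard part will be this estimate: the point is that the initial condition is reset onto $\cC$ at the start of every arc, so the Gronwall error is controlled on each length-$O(1)$ piece and never accumulates along the long curve $\Ga_n$.

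With the arc decomposition in hand, replacing $\dot\Ga_n$ by $\tJ(\Ga_n)$ costs at most $\sup_\si|\dot\Ga_n-\tJ(\Ga_n)|\cdot\sup|u|\cdot T_n=o(1)\cdot N_n$ in total, hence $o(1)$ after dividing by $N_n$. On each arc, the uniform closeness of $\Ga_n(\si_j+\cdot)$ to $\Psi^{-1}(\al_{(j)},\cdot)$ and of the arc length to~$1$ give
\[
\int_{\si_j}^{\si_{j+1}}u(\Ga_n)\cdot\tJ(\Ga_n)\,d\si=\int_0^1\langle\Psi_*u,\pd_\si\rangle\big|_{(\al_{(j)},\si)}\,d\si+o(1)=:g(\al_{(j)})+o(1)\,,
\]
where $g:\TT^1\to\RR$ is continuous since $\Psi^{-1}$, $u$ and $\tJ$ are, and the integrand is the same one that appears in Lemma~\ref{L.from}. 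Summing over the $N_n$ arcs and dividing by $N_n$ gives
\[
\int u\cdot d\la_n=c_0\int_{\TT^1}g\,d\rho_n+o(1)\,,\qquad d\rho_n:=\frac1{N_n}\sum_{j=1}^{N_n}\de_{\al_{(j)}}\,.
\]
Finally, hypothesis~(i) says precisely that $d\rho_n\to d\rho$ weakly on $\TT^1$, so, $g$ being continuous,
\[
\lim_{n\to\infty}\int u\cdot d\la_n=c_0\int_{\TT^1}g\,d\rho=c_0\int_{\TT^2}\langle\Psi_*u,\pd_\si\rangle\,d\rho\,d\si=\int_\cS u\cdot J\,dS\,,
\]
the last two identities being exactly the chain established at the end of the proof of Lemma~\ref{L.from}. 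This gives the weak convergence and hence the lemma.
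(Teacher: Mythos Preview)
Your proof is correct and follows essentially the same approach as the paper's: you decompose $\Ga_n$ into arcs between consecutive crossings of~$\cC$, use a Gronwall-type argument to see that each arc has length $1+o(1)$ and is uniformly close to the integral curve of~$\tJ$ through its starting point, and then reduce to the equidistribution computation at the end of Lemma~\ref{L.from}. The only cosmetic difference is that the paper packages the comparison as a single $C^1$ estimate between $\Ga_n$ and the reference integral curves $\tga_{n,k}$, whereas you split it into ``replace $\dot\Ga_n$ by $\tJ(\Ga_n)$'' followed by ``replace $\Ga_n$ by the integral curve''; the content is the same.
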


\begin{proof}
We will use the notation introduced in the proof of Lemma~\ref{L.from}
without further notice. Setting 
\[
N_n:=\text{\#}(\Ga_n\cap \cC)\,,
\]
let us denote the intersection points of $\Ga_n$ with the transverse
curve~$\cC$ by
\[
\Ga_n\cap \cC=\{p_{n,k}\}_{k=1}^{N_n}\,,
\]
where we are labeling the points~$p_{n,k}$ so that they correspond to
consecutive intersection points. We will denote the intersection times
of the curve~$\Ga_n$ by~$\si_{n,k}\in [0,T_n)$, with $\si_{n,1}=0$ and
\[
p_{n,k}=\Ga_n(\si_{n,k})\,.
\]
The point in the unit circle associated to~$p_{n,k}$ under the
map~$\Theta$ will be denoted by $\al_{n,k}:=\Theta(p_{n,k})$.

The uniform convergence of $\dot\Ga_n(\si)$ to
$\tJ(\Ga_n(\si))$ and the fact that all integral curves of $\tJ$ are
closed with period~1 obviously imply that the time between
consecutive intersections with~$\cC$ tends to~1:
\begin{equation}\label{eqnueva}
\lim_{n\to\infty} \max_{1\leq k\leq N_n} |\si_{n,k+1}-\si_{n,k}-1|=0\,.
\end{equation}
Throughout we are identifying $\si_{n,N_n+1}:=T_n$. In particular,
this implies that $T_n/N_n$
tends to~1. Letting
$\tga_{n,k}:\TT^1\to\cS$ be the integral curve of the
isochronized field~$\tJ$ with initial condition
$\tga_{n,k}(0)=p_{n,k}$, we then infer that the integral curve~$\tga_{n,k}$ is
close to $\Ga_n$ in the sense that
\begin{equation}\label{convC1}
\lim_{n\to\infty}\max_{1\leq k\leq N_n} \big\|\Ga_n(\cdot)
-\tga_{n,k}(\,\cdot- \si_{n,k})\big\|_{C^1((\si_{n,k},\si_{n,k+1}))}=0\,.
\end{equation}

Let us define $c_0$ as in the proof of Lemma~\ref{L.from}. To prove that 
\[
d\la_n:=\frac{c_0}{N_n}\, \dot\Ga_n(\si)\, d\si 
\]
converges weakly to $J\, dS$, let us take an arbitrary smooth
function $u:\cS\to\RR^3$, which without loss of generality can be
thought of as a tangent vector field on~$\cS$. Notice that
\begin{multline*}
\cE_n:=\max_{1\leq k\leq N_n} \bigg|\int_{\si_{n,k+1}-\si_{n,k}}^1 u(\tga_{n,k}(\si))\cdot
\dot\tga_{n,k}(\si)\, d\si \bigg|\\
+ \max_{1\leq k\leq N_n}\int_{\si_{n,k}}^{\si_{n,k+1}} \big|
u(\Ga_n(\si))\cdot \dot\Ga_n(\si) - u(\tga_{n,k}(\si-\si_{n,k})) \cdot\dot
\tga_{n,k}(\si-\si_{n,k})\big|\, d\si 
\end{multline*}
tends to zero as $n\to\infty$ by~\eqref{eqnueva}
and~\eqref{convC1}. Setting
\[
d\La_n:=\frac{c_0}{N_n}\sum_{k=1}^{N_n}\dot\tga_{n,k}(\si)\, d\si\,,
\]
one then has that
\begin{align*}
\bigg|\int_{\cS} u&\cdot d\la_n- \int_{\cS} u\cdot d\La_n\bigg|\leq
\frac{c_0}{N_n}\sum_{k=1}^{N_n} \bigg|\int_{\si_{n,k+1}-\si_{n,k}}^1 u(\tga_{n,k}(\si))\cdot
\dot\tga_{n,k}(\si)\, d\si \bigg|\\
&+ \frac{c_0}{N_n}\sum_{k=1}^{N_n} \int_{\si_{n,k}}^{\si_{n,k+1}} \big|
u(\Ga_n(\si))\cdot \dot\Ga_n(\si) - u(\tga_{n,k}(\si-\si_{n,k})) \cdot\dot
\tga_{n,k}(\si-\si_{n,k})\big|\, d\si \\
&\leq c_0\, \cE_n
\end{align*}
also tends to zero as $n\to\infty$. Here we have used that
\begin{multline*}
\int_{\si_{n,k}}^{\si_{n,k}+1} u(\tga_{n,k}(\si))\cdot
\dot\tga_{n,k}(\si)\, d\si= \int_{\si_{n,k}}^{\si_{n,k+1}} u(\tga_{n,k}(\si))\cdot
\dot\tga_{n,k}(\si)\, d\si\\
+ \int_{\si_{n,k+1}-\si_{n,k}}^1 u(\tga_{n,k}(\si))\cdot
\dot\tga_{n,k}(\si)\, d\si\,.
\end{multline*}
As the points $\al_{n,k}$ are equidistributed with respect to the
probability measure~$d\rho$, it follows directly from the proof of Lemma~\ref{L.from} that the
measures $d\La_n$ converge to $J\, dS$ as $n\to\infty$. The lemma
is then proved.
\end{proof}

\begin{remark}
Using the coordinates $(\al,\si)$ introduced in the proof of
Lemma~\ref{L.from}, one can visually understand these results as
follows. In these coordinates on~$\cS$ (which do not generally arise from a
diffeomorphism of the ambient space $\RR^3$, as they change the knot
type of the integral curves of~$J$), the integral curves of $J$ are
the vertical circles $\{\al=\text{constant}\}$. The transverse curve $\cC$
corresponds to the equatorial circle $\{\si=0\}$ and the collection of integral curves $\ga_k$
constructed in Lemma~\ref{L.from} is simply a collection of vertical
circles $\{\al=\al_k\}$ with $\al_k$ distributed according to certain
probability measure. The curves $\Ga_n$ satisfying the assumptions of
Lemma~\ref{L.curve} correspond to nearly vertical periodic curves
which close after winding once in the horizontal direction and $N_n$
times in the vertical direction.
\end{remark}

\section{Magnetic lines and wires of arbitrary topology}
\label{S.T1}

In this section we will prove Theorem~\ref{T1}. For the sake of
clarity, let us divide the argument in several steps:

\subsubsection*{Step 1: Construction of a surface current distribution with a
  hyperbolic magnetic line isotopic to~$\tga$}

Let us consider the toroidal surface $\sur$ of core curve~$\tga$ and small
width~$\ep$, and the divergence-free tangent vector field~$J$ on~$\sur$ given by
\begin{equation}\label{defJ2}
J:=\frac1{1-\ep\ka(s)\, \cos\te}\,J_0\,,\qquad J_0:= 2\cos2\te\,
\pd_s+ \frac1\ep\,\pd_\te\,.
\end{equation}
Notice that the field~$J$ is of the form~\eqref{defJ}, so
Lemma~\ref{L.asympt} ensures that the magnetic
field $\cB_J$ generated by the surface current distribution $J\, dS$
is given by
\[
\cB_J= \big[1+q_0(\ep,y)\big]\, \pd_s -\frac{y_2+q_1(\ep,y)}\ep\pd_{y_1}-\frac{y_1+q_2(\ep,y)}\ep\pd_{y_2}\,.
\]
where
\[
q_0(\ep,y)= O(\ep\log\ep+|y|)\,,\quad q_1(\ep,y)=O(\ep\log\ep+|y|^2)\,,\quad q_2(\ep,y)=O(\ep\log\ep+|y|^2)\,.
\]

Let us consider the vector field on the domain~$\cT$ bounded by~$\sur$ given by
\[
X:=-(y_2+q_1(0,y))\,\pd_{y_1}-(y_1+q_2(0,y))\,\pd_{y_2}\,,
\]
which vanishes identically on the curve $\tga\equiv \{y=0\}$ (of
course, for $\ep=0$ we are taking $\ep\log\ep$ as zero).
In terms of the coordinates
\[
\tilde y_1:=y_1+y_2\,,\qquad \tilde y_2:=y_1-y_2\,,
\]
the integral curves of the linearization
\[
\widetilde X:=-y_2\,\pd_{y_1}-y_1\,\pd_{y_2}
\]
of~$X$ are given by
\[
s(t)=s_0\,,\qquad \tilde y_1(t)= \tilde y_{10}\, e^{-t}\,,\qquad
\tilde y_2(t)= \tilde y_{20}\, e^t\,.
\]
Therefore the invariant set $\tga\equiv \{y=0\}$ of~$ X$ is normally hyperbolic because at each point
of the circle there is a one-dimensional stable component
(corresponding to the variable $\tilde y_1$ with Lyapunov exponent $-1$) where the
flow is exponentially contracting and a one-dimensional unstable component
(corresponding to $\tilde y_2$ with Lyapunov exponent $1$) where the
flow is exponentially expanding. 

It is therefore well known that the invariant circle~$\{y=0\}$ is
preserved under small perturbations of the field~$X$. More precisely,
let us take any integer~$k\geq1$, a compact set~$K\subset\cT$ enclosing the curve
$\{y=0\}$ and define the $C^k$ norm of a vector field,
\[
\|Y\|_{C^k(K)}\,,
\]
as the sum of the $C^k(K)$ norms of the components of $Y$  in the basis
$\{\pd_s,\pd_{y_1},\pd_{y_2}\}$ (this is important to avoid having to
deal with inessential factors of~$\ep$).
One then has~\cite[Theorem 4.1]{HPS} that there exists
some positive constant~$\de_1$ such that any field~$Y$ with
\begin{equation}\label{XY}
\|X-Y\|_{C^k(K)}<\de_1
\end{equation}
has a one-dimensional invariant set isotopic to the curve $\{y=0\}$,
and the distance between the corresponding isotopy~$\Theta$ and the
identity in the $C^k$ norm is of order~$\de_1$:
\[
\|\Theta-\id\|_{C^k(K)}<C\de_1\,.
\]

Since
\[
\|X-\ep\cB_J\|_{C^k(K)}<C\ep\,\log\frac1\ep\,,
\]
it follows from~\eqref{XY} that, for small enough~$\ep$, $\ep\cB_J$ (and therefore $\cB_J$) has a one-dimensional
invariant set $\tga_2$ isotopic to the curve $\tga\equiv \{y=0\}$ and contained in a
small neighborhood $\{|y|<C\ep\log\frac1\ep\}$. As the field $\cB_J$
does not vanish in the set $\{|y|<C\ep\log\frac1\ep\}$ for small enough~$\ep$,
$\tga_2$ must be a periodic integral curve of~$\cB_J$. The normal
hyperbolicity of the invariant set~$\{y=0\}$ of~$X$ implies that
$\tga_2$ is a hyperbolic periodic integral curve of~$\cB_J$, so in
particular it is robust in the sense that there is some $\de_2>0$
such that any field~$Z$ with
\begin{equation}\label{hyperb}
\|\cB_J-Z\|_{C^k(K)}<\de_2
\end{equation}
must have a periodic integral curve isotopic to~$\tga$, and the
isotopy can be chosen close to the identity in $C^k$.

\subsubsection*{Step 2: Approximation of the magnetic field created by the
  surface current distribution by the sum of the fields of a finite collection of unknotted wires}

Let us next analyze the integral curves of $J_0$, which coincide with those
of~$J$ up to a reparametrization of the curve. These are the solutions
to the system of ODEs
\begin{equation*}
\dot s= 2\cos 2\te\,,\qquad \dot\te= \frac1\ep\,,
\end{equation*}
with an arbitrary initial condition $(s_0,\te_0)$, that is,
\begin{equation}\label{stet}
 s(t)=s_0 +\ep\,\sin \Big( 2\te_0+ \frac {2t}\ep\Big)\,,\qquad \te(t)=\te_0+ \frac t\ep\,,\,.
\end{equation}
These curves are all periodic with period $2\pi\ep$. Geometrically,
for small~$\ep$ this integral curve is a small deformation of, and isotopic
to, the circle $\{s=s_0\}$ contained in the torus $\{r=1\}$. This
curve is obviously isotopic to the unknot.

Lemmas~\ref{L.conv} and~\ref{L.from} then ensure that there is a finite collection of periodic
integral curves $\{\ga_k\}_{k=1}^N$ of~$J$ such that the sum of the
magnetic fields that they create,
\[
Y:=\sum_{k=1}^N B_{\ga_k}\,,
\]
is close to $\cB_J$ in the set~$K$ modulo multiplication by a positive
constant $c:=c_0/N$:
\begin{equation}\label{BjY}
\|\cB_J-cY\|_{C^k(K)}<\frac{\de_2}3\,.
\end{equation}
This collection of curves is depicted in Figure~\ref{f1}.

\begin{figure}[t]
  \centering
 \includegraphics[scale=0.6,angle=0]{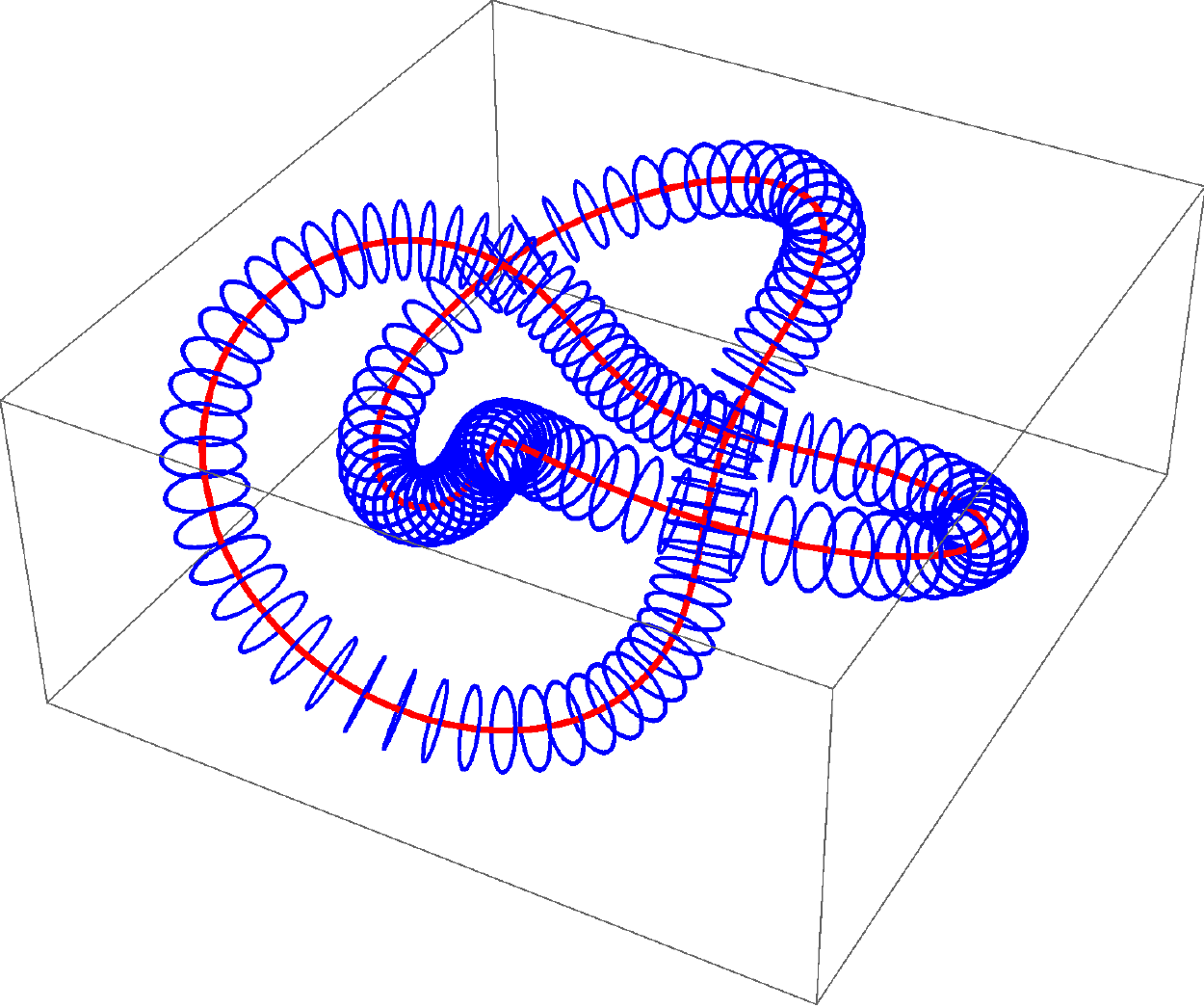}
\caption{A collection of closed wires (represented as thin blue lines) the sum of whose
  magnetic fields has a magnetic field diffeomorphic to the core knot
  (in red; in this case, a trefoil).}\label{f1}
\end{figure}

\subsubsection*{Step 3: Replacing the finite collection of unknotted
  wires by a single unknot $\Ga_\de$}

Let us denote by
\[
d\La:=\sum_{k=1}^N \dot\ga_k (t)\, dt
\]
the vector-valued measure associated with the above periodic integral
curves, so that the field~$Y$ can be written as
\[
Y(x)=\int\frac{d\La(x')\times (x-x')}{4\pi|x-x'|^3}\,.
\]
For concreteness, we will henceforth assume that the curves $\ga_k$
are parametrized as in~\eqref{stet}, so $t$ ranges over $(0,2\pi\ep)$
in each curve. Notice that the measure~$d\La$ is independent of the
way the curves are parametrized, provided that the orientation is preserved.

We will need the following observation. Let $I_1,\dots, I_N$ be intervals of length at
most~$\de$ and let us denote by $d\La_\de$ the measure obtained
from~$d\La$ after removing the intervals $I_k$ from the curves. That
is, for any continuous vector-valued function~$F$ we set
\[
\int F\cdot d\La_\de:=\sum_{k=1}^N \int_{0<t<2\pi\ep,\;  t\not\in
  I_k}F(\ga_k(t))\cdot \dot\ga_k (t)\, dt\,.
\]
Then $d\La_\de$ obviously converges weakly to $d\La$ as $\de\to0$, for
any choice of the intervals~$I_k$. 

Another useful observation is the following. Let $p,q$ be
points in~$\RR^3\backslash K$. Then, given any other pair of points $\tilde
p,\tilde q$
with
\[
|p-\tilde p|+|q-\tilde q|<\de\,,
\]
it is clear that one can choose curves
$\Ga_0,\widetilde\Ga_0:[0,1]\to\RR^3\minus K$ such that
\[
\Ga_0(0)=p\,,\qquad \Ga_0(1)=q\,,\qquad \widetilde\Ga_0(0)=\tilde p\,,\qquad
\widetilde\Ga_0(1)=\tilde q\,.
\]
and the distance between the curves is of order~$\de$ is the sense that
\begin{equation}\label{GG}
\|\Ga_0-\widetilde \Ga_0\|_{C^k([0,1])}<C\de\,.
\end{equation}
An immediate consequence of~\eqref{GG} is that, if we
reverse the orientation of $\widetilde\Ga_0$ by setting $\widehat
\Ga_0(t):=\widetilde\Ga_0(1-t)$, then the vector-valued measure
\[
\dot\Ga_0(t)\, dt + \dot{\widehat\Ga}_0(t)\, dt
\]
converges weakly to zero as $\de\to0$.

With these two observations, for each $\de>0$ we will construct a closed
oriented curve $\Ga$ (piecewise smooth, although eventually we will smooth things out)
such that the associated measure 
\[
\dot\Ga(t)\, dt
\]
converges weakly to $d\La$ as $\de\to0$. To this end, in each curve
$\ga_k$ we will fix two distinct points $p_k,q_k$. For each~$\de>0$,
let us take two points $\tilde p_k,\tilde q_k$ satisfying
\[
|p_k-\tilde p_k|+|q_k-\tilde q_k|<\de\,.
\]
The second observation above ensures that one can connect the points
$(p_k,\tilde p_k)$ with $(q_{k+1},\tilde q_{k+1})$ through curves
$\Ga_k,\widehat\Ga_k:[0,1]\to\RR^3$ that  do not intersect one another and
such that the measure
\[
\sum_{k=1}^N\big( \dot\Ga_k(t)\, dt + \dot{\widehat \Ga}_k(t)\, dt\big)
\]
converges weakly to zero as $\de\to0$. Here $k$ ranges from 1 to~$N$
and we identify $N+1$ with 1. Let us define the piecewise smooth
curve $\Ga$ as the union of the curves $\Ga_k$, $\widehat\Ga_k$
and the integral curves $\ga_k$, from which we remove the $2N$ arcs of the curves of length of
order~$\de$ connecting the points $p_k$ with~$\tilde p_k$ and $q_k$
with~$\tilde q_k$. It can be easily seen that one can choose the orientation of the
curves $\Ga_k,\widehat\Ga_k$ so that the curve~$\Ga$ has a well
defined orientation, which coincides with the orientation of
each~$\ga_k$ (cf.~Figure~\ref{f2}).

\begin{figure}[t]
  \centering
 \includegraphics[scale=0.8,angle=0]{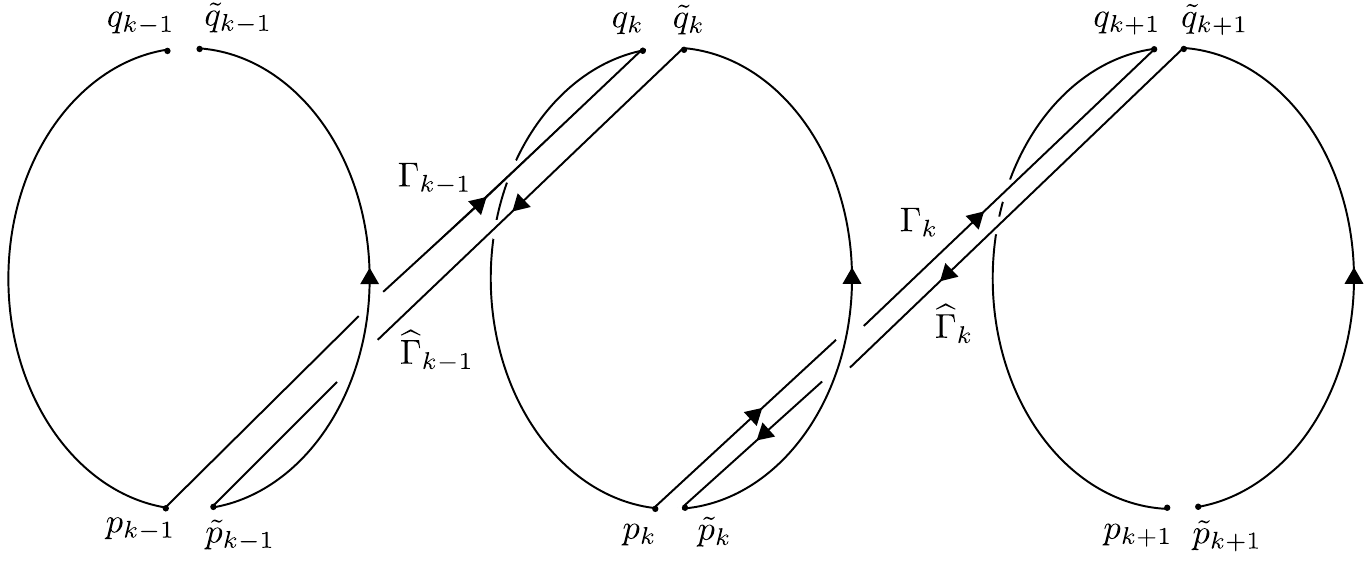}
\caption{Passing from a collection of unknots to a single unknot.}\label{f2}
\end{figure}

The two observations that we made above then imply that the measure
$\dot\Ga(t)\, dt$ converges weakly to $d\La$ as $\de\to0$, so we infer from
Lemma~\ref{L.conv} that the magnetic field created by~$\Ga$ is
close to~$Y$ in the sense that
\begin{equation}\label{BGaY}
\|B_{\Ga}-Y\|_{C^k(K)}<\frac{\de_2}{3c}
\end{equation}
whenever the constant~$\de$ is small enough. Furthermore, since
$\Ga$ has been constructed as the connected sums of the
unknots $\ga_1,\dots,\ga_N$, it is standard that it is also an unknot.

\subsubsection*{Step 4: From the unknot~$\Ga$ to $\ga$ through a connected
  sum taking place far from the magnetic line}

Let us take a large number~$R$ that will be fixed later. Translating
the curve~$\ga$ if necessary, we can assume that the distance
between~$\ga$ and the set~$K$ is at least~$R$, and that $\ga$ does not
intersect~$\Ga$.

Let us fix points $P\in\Ga$, $Q\in\ga$. For small~$\de$, let us
take another couple of points $\tilde P\in\Ga$, $\tilde Q\in\ga$
with
\[
|P-\tilde P|+|Q-\tilde Q|<\de_3
\]
for a small enough constant $\de_3$.  The second observation in
Step~3 ensures that there are oriented curves $\Ga_0,\widehat\Ga_0$
connecting the points $P,\tilde P$ with $Q,\tilde Q$, respectively,
and such that the measure
\[
\dot\Ga_0(t)\, dt + \dot{\widehat\Ga}_0(t)\, dt
\]
converges weakly to~0 as $\de_3\to0$. One can obviously assume that
the distance from these curves to the set~$K$ is uniformly bounded
away from zero. We can now define a piecewise smooth curve $\Ga'$ as
the union of the curves $\Ga_0$, $\widehat \Ga_0$, $\Ga$ and~$\ga$,
without the two arcs of length of order~$\de_3$ that connect the
points $P$ and $Q$ with $\tilde P$ and $\tilde Q$, in each case. We
choose the orientation of $\Ga'$ so that it coincides with that of
$\ga$ and~$\Ga$.

It follows from the construction that $\Ga'$ is isotopic to~$\ga$
(because it is the connected sum of~$\ga$ with an unknot) and
that 
\[
\dot \Ga'(t)\, dt\to \dot\Ga(t)\, dt+ \dot\ga(t)\, dt
\]
as $\de_3\to0$. By Lemma~\ref{L.conv}, one then has
\[
\lim_{\de_3\to0} \|B_{\Ga'}-B_\Ga-B_\ga\|_{C^k(K)}=0\,.
\]
Since the distance between $K$ and $\ga$ is at least~$R$, it is
clear that
\[
\|B_\ga\|_{C^k(K)}\leq \frac C{R^2}\,,
\]
so $B_{\Ga'}$ converges to $B_\Ga$ on~$K$ as $R\to\infty$ and
$\de_3\to0$. For convenience, let us denote by $\Ga''$ the curve that
one obtains by slightly rounding off the corners of~$\Ga'$, which can then
be chosen (for large~$R$ and small~$\de_3$) to satisfy
\[
\|B_{\Ga''}-B_\Ga\|_{C^k(K)}<\frac{\de_2}{3c}\,.
\]
By Equations~\eqref{BjY} and~\eqref{BGaY}, it then
follows that
\begin{align*}
\|\cB_J-cB_{\Ga''}\|_{C^k(K)}&\leq
\|\cB_J-cY\|_{C^k(K)}+c\|B_\Ga-Y\|_{C^k(K)}+c\|B_\Ga-B_{\Ga''}\|_{C^k(K)}\\
&<\de_2\,,
\end{align*}
so the condition~\eqref{hyperb} ensures that the magnetic
field~$B_{\Ga''}$ generated by the wire~$\Ga''$ (which is isotopic
to~$\ga$) has a periodic magnetic line that is isotopic to (and
actually a $C^k$~small deformation of) $\tga$. Theorem~\ref{T1} then follows.

\begin{remark}\label{R.many}
It is worth noticing that, although we have chosen a very concrete
current~$J$ in the proof of Theorem~\ref{T1}, the same argument works
in much greater generality. In particular, the argument goes through
for any current of the form considered in Lemma~\ref{L.asympt}
provided that the function $G(s)$ does not vanish and the Fourier
coefficients of $F(\te)$ satisfy
\[
a_1=b_1=0\,, \qquad a_2^2+b_2^2\neq0\,.
\]
\end{remark}

\section{Existence of knotted magnetic lines for generic knotted wires}
\label{S.T2}

In this section we will prove Theorem~\ref{T2}. For concreteness, let us take again the tangent vector field~$J$ on the
toroidal surface $\surf$ given by~\eqref{defJ2}, where~$\ep$ is a small
constant. 

Our objective is to show that there are periodic curves
$\Ga_n:\RR/n\ZZ\to\surf$ satisfying the hypotheses of
Lemma~\ref{L.curve} for the field~$J$ that are isotopic to~$\ga$. Notice that, as the
curve lies on~$\surf$, the $C^0$~norm of the difference between the isotopy and the
identity is of order~$\ep$, and can therefore be made as small as one
wishes. Furthermore, we will choose the curve so that the number of
intersection points $\#(\Ga_n\cap \cC)$, as defined in
Lemma~\ref{L.curve}, is precisely~$n$. Lemma~\ref{L.curve} then
implies that there is a constant $c_0$ such that the measure
\[
\frac{c_0}n\, \dot\Ga_n(t)\, dt
\]
converges weakly to $J\, dS$, so Lemma~\ref{L.asympt} ensures that
for any compact set~$K$ that does not cut~$\surf$ one has
\begin{equation}\label{eqnueva2}
\lim_{n\to\infty}\bigg\|\frac{c_0}n B_{\Ga_n}-\cB_J\bigg\|_{C^k(K)}=0\,.
\end{equation}
Since we proved in Step~1 of Section~\ref{S.T1} that the field $\cB_J$ as a
hyperbolic periodic magnetic line isotopic to~$\ga$ and $C^k$~close to
it, it stems from~\eqref{eqnueva2} that
for large enough~$n$
the magnetic field $B_{\Ga_n}$ also has a periodic magnetic line
isotopic and $C^k$~close to~$\ga$. Hence Theorem~\ref{T2} will
then follow once we construct the curve~$\Ga_n$.

The construction of the curve $\Ga_n$ is simpler in the coordinates
$(\al,\si)$ introduced in~\eqref{defPsi}. We will henceforth use the notation 
developed in the proof of Lemma~\ref{L.from} without further mention. Let us consider a sequence of points 
\begin{equation}\label{alkfin}
\{\al_k\}_{k=1}^\infty\subset\TT^1
\end{equation}
that are uniformly distributed with respect to the probability
measure~\eqref{measure}. We can safely assume that $\al_k\neq\al_{k'}$
for all $k\neq k'$. For each~$n$, let us write
\[
\{\al_k:1\leq k\leq n\}=\{\al_{n,k}:1\leq k\leq n\}\,,
\]
where $\al_{n,k}$ is a relabelling of the $n$~first points $\al_k$
chosen so that, identifying the points in $\TT^1$ with numbers
in~$[0,1)$, one has
\[
0\leq \al_{n,1}<\al_{n,2}<\cdots <\al_{n,n}\,.
\]
Since the probability measure~\eqref{measure} is absolutely
continuous, the sequence~\eqref{alkfin} is dense on~$\TT^1$, so the
difference
\[
[0,1)\ni \De_{n,k}:=\al_{n,k+1}-\al_{n,k}\mod 1\,,
\]
understood as a number in~$[0,1)$ where we are using the convention
$\al_{n,n+1}\equiv \al_{n,1}$, must tend uniformly to zero in the
sense that
\begin{equation}\label{limDe}
\lim_{n\to\infty}\max_{1\leq k\leq n}\De_{n,k}=0\,.
\end{equation}

Take a smooth function $\chi:\RR\to\RR$ such that 
\[
\chi(t)=\begin{cases}
0 &\text{for }t\leq 0\,,\\
1 &\text{for }t\geq 1\,.
\end{cases}
\]
Let us define smooth curves $\widetilde\Ga_n:\RR/
n\ZZ\to\TT^2$ in
terms of the coordinates $(\al,\si)$ as
$\widetilde\Ga_n(t):=(\al_n(t),\si_n(t))$, where
we set
\begin{align*}
\al_n(t)&:=\al_{n,1} +\sum_{k=1}^n\De_{n,k} \chi(t+1-k)&\mod 1\,,\\
\si_n(t)&:= t &\mod 1\,.
\end{align*}
It is clear that $\widetilde\Ga_n$ has period~$n$ and that in each period
the curve winds once along the coordinate~$\al$ and $n$~times along the
coordinate~$\si$. Moreover,
\[
\dot{\widetilde\Ga}_n(t)=\bigg(\sum_{k=1}^n\De_{n,k}\, \chi'(t+1-k)\bigg)\, \pd_\al+\pd_\si\,,
\]
where $\chi'$ denotes the derivative of~$\chi$. Since at most one of
the functions $\chi'(t+1-k)$ can be nonzero at any time~$t\in\RR$, it is
apparent from~\eqref{limDe} that
\begin{equation}\label{estdotGa}
\lim_{n\to\infty}\sup_{t\in\RR} \big|\dot{\widetilde\Ga}_n(t)-\pd_\si\big|=0\,.
\end{equation}

Finally, let us now define the curve $\Ga_n:\RR/n\ZZ\to\surf$ as
\[
\Ga_n(t):=\Psi^{-1}\widetilde\Ga_n(t)\,.
\]
It is not hard to see that, as $\widetilde\Ga_n$ winds once in the $\al$-direction and $n$~times in the
$\si$-direction, the curve~$\Ga_n$ is a $(1:n')$ cable over the core
curve~$\ga$, where 
\[
n':=n-N_0
\]
with $N_0$ a fixed number. In particular, for any large enough~$n$,
$\Ga_n$ is isotopic to~$\ga$. For this we will need to compute the
expression of the curves in the Frenet coordinates and to take into
account the curve's own twist. The reason is that, as changes of
coordinates in the torus do not necessarily come from an ambient
diffeomorphism, one cannot use arbitrary coordinates on the torus to
check the isotopy type of a curve.

To check this, we start by taking the set~$\cC$ as
\[
\cC:=\{r=\ep\,,\; \te=0\}\,,
\]
so the coordinate $\al:\cC\to\TT^1$ can be chosen as
\[
\al:=\frac s\ell\,,
\]
where we recall that $\ell$ denotes the length of the curve~$\ga$. The
equation for the integral curves of~$J$,
\begin{align*}
\dot s&= \frac{2\cos 2s}{1-\ep\ka(s)\, \cos\te}\,,\\
\dot \te&= \frac1{\ep(1-\ep\ka(s)\, \cos\te)}\,,
\end{align*}
implies that in terms of the time variable $t'$ defined by the ODE
\[
\frac{dt'}{dt}=\frac1{2\pi\ep[1-\ep\ka(s(t))\, \cos\te(t)]}\,,\qquad t'|_{t=0}=0\,,
\]
the integral curves are given by
\[
s=s_0+\ep \sin(2\te_0+4\pi t')\,,\qquad \te=\te_0+2\pi t'\,.
\]
Since
\[
t'=\frac {t\, (1+O(\ep))}{2\pi\ep}\,,
\]
it stems that the period is
\[
T=2\pi\ep+O(\ep^2)\,,
\]
so 
\[
\tJ=T\, J= 2\pi\, \pd_\te+O(\ep)\,.
\]
Hence the 
variable~$\si$ can be written in terms of~$(s,\te)$ as
\[
\si=\frac\te{2\pi}+O(\ep)\,.
\]
In view of the expression of $(\al,\si)$ in terms of $(s,\te)$, it is
apparent that the curve $\Ga_n$ winds once along the
coordinate $s$ and $n$~times along the coordinate~$\te$. The coordinates
$(s,\te)$ correspond to the Frenet frame, which is well known~\cite{Pohl} to twist
$N_0$ times along the curve~$\ga$, where 
\[
\pm N_0=\frac1{2\pi}\int_0^{\ell}\tau(s)\, ds+\text{Writhe}(\ga)
\]
is the total torsion of the curve plus its writhe (the sign here
depends of the orientation of the frame). Hence we infer that
$\Ga_n$ is a $(1:n')$ cable over~$\ga$, so it is isotopic
to~$\ga$ (see Figure~\ref{f3}).

\begin{figure}[t]
  \centering
 \includegraphics[scale=0.6,angle=0]{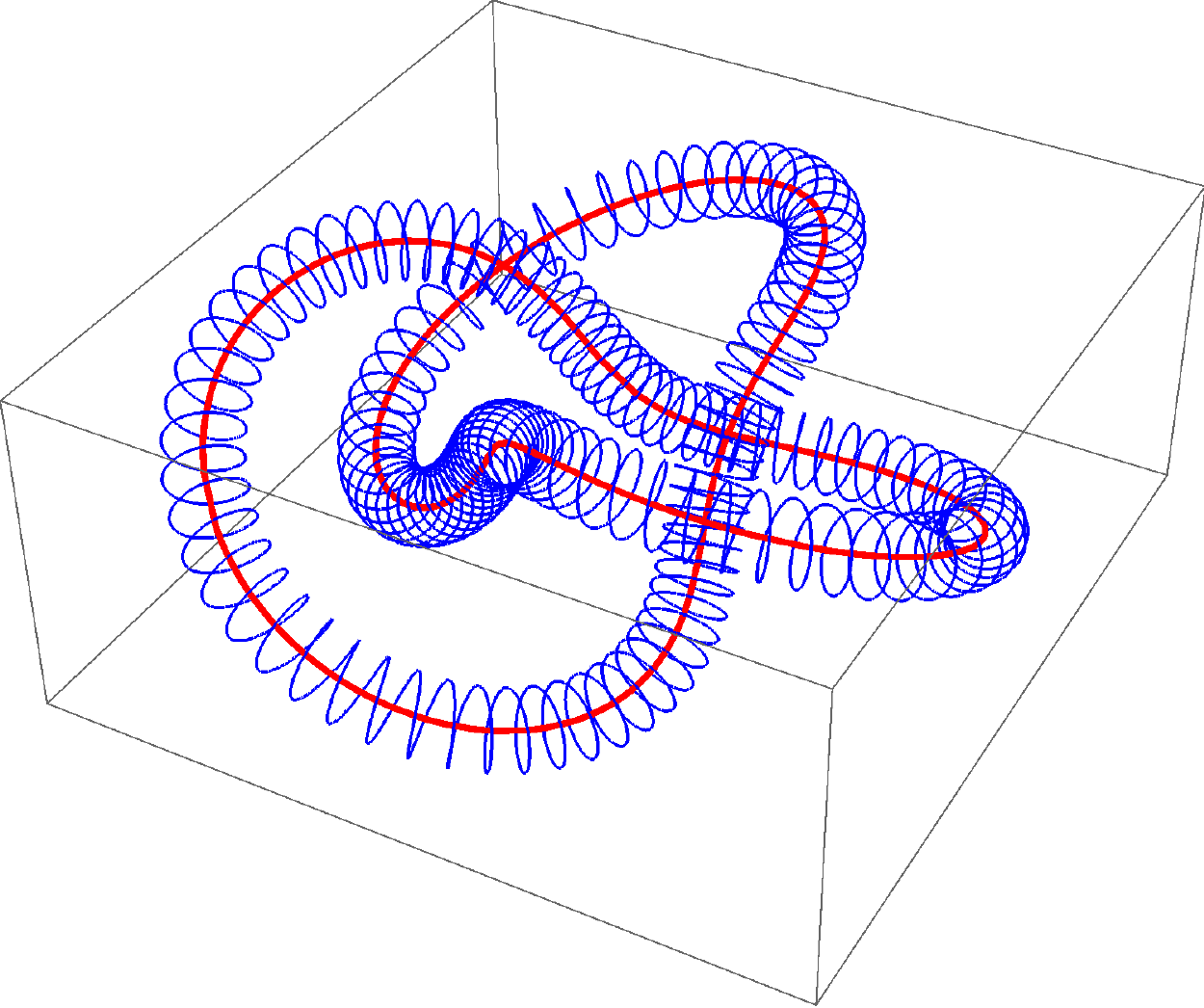}
 \caption{A wire (thin blue line) isotopic to the core knot (in this case, a
   trefoil) and $C^0$~close to it that has a magnetic line (in red)
   isotopic to the core knot. The wire and the core curve can be
   chosen arbitrarily close.}\label{f3}
\end{figure}

By construction, the intersection of $\Ga_n$ with the set~$\cC$ is
the image under the diffeomorphism $\Theta^{-1}$ of the points
$\{\al_k:1\leq k\leq n\}$, so as $n\to\infty$ they are distributed with
respect to the measure~\eqref{measure}. Moreover, since the
push-forward of the field~$\tJ$ under $\Psi$ is precisely $\pd_\si$,
it follows from~\eqref{estdotGa} that
\[
\lim_{n\to\infty}\sup_{\si\in\RR}\big|\dot\Ga_n(\si) - \tJ(\Ga_n(\si))\big|=0\,.
\]
Hence $\Ga_n$ is a sequence of curves that has the properties that we
required above, so Theorem~\ref{T2} follows.


\begin{remark}
Although we have taken a concrete example of current field~$J$ for which all
the computations can be made in a very explicit way, the argument
carries over verbatim to a much more general class of fields~$J$. In
particular, sufficient conditions for the argument to remain valid are
the following:
\begin{enumerate}
\item The integral curves of~$J$ are all small deformations of (and
  isotopic to) the circles $\{s=\text{constant}\}$, in the coordinates
  that we defined on the surface. (In fact, while the fact that the integral
  curves are all periodic is key, the condition that they are small
  deformations of the circles $\{s=\text{constant}\}$ can be relaxed
  significantly, as it is only used to control the isotopy type of the
  curve.)

\item The magnetic field $\cB_J$ has a hyperbolic periodic magnetic line
  isotopic to and $C^k$~close to~$\ga$. As we saw in
  Section~\ref{S.T1}, a sufficient condition for this is that the
  functions~$F$ and~$G$ that appear in the field~$J$ satisfy the
  conditions of Remark~\ref{R.many}.
\end{enumerate}
\end{remark}

\section*{Acknowledgments}

The authors are supported by the ERC Starting Grants~633152 (A.E.) and~335079
(D.P.-S.). This work is supported in part by the
ICMAT--Severo Ochoa grant
SEV-2015-0554.

\bibliographystyle{amsplain}

\end{document}